\theoremstyle{theorem}
\newtheorem{theorem}{Theorem}
\newtheorem{lemma}[theorem]{Lemma}
\newtheorem{proposition}[theorem]{Proposition}
\newtheorem{corollary}[theorem]{Corollary}
\theoremstyle{definition}
\newtheorem{example}[theorem]{Example}
\newtheorem{remark}[theorem]{Remark}
\theoremstyle{remark} \theoremstyle{question} \theoremstyle{example}
\newcommand{\N}{\mathbb{N}}
\newcommand{\Z}{\mathbb{Z}}
\newcommand{\R}{\mathbb{R}}
\newcommand{\C}{\mathbb{C}}
\newcommand{\K}{\mathbb{K}}
\newcommand{\cP}{\mathcal{P}}
\newcommand{\fM}{\mathfrak{M}}
\newcommand{\spa}{\operatorname{span}}
\newcommand{\supp}{\operatorname{supp}}
\newcommand{\NS}{\operatorname{NS}}
\newcommand{\eps}{\varepsilon}
\newcommand{\ov} {\overline}
\DeclareRobustCommand{\rchi}{{\mathpalette\irchi\relax}}
\newcommand{\irchi}[2]{\raisebox{\depth}{$#1\chi$}}
\begin{document}


\title[Li-Yorke Chaotic Weighted Composition Operators]{Li-Yorke Chaotic Weighted Composition Operators}

\subjclass[2020]{Primary 47A16, 47B33; Secondary 46E15, 46E30, 37B99. }
\keywords{Li-Yorke chaos, weighted composition operators, $C_0(X)$ spaces, $L^p(\mu)$ spaces, 
Fr\'echet sequence spaces, weighted shifts.}
\date{}
\dedicatory{}
\maketitle

\begin{center}
{\sc Nilson C. Bernardes Jr.}

\medskip
Institut Universitari de Matem\`atica Pura i Aplicada\\
Universitat Polit\`ecnica de Val\`encia\\
Cam\'i de Vera S/N, Edifici 8E, Acces F, 4a Planta, Val\`encia, 46022, Spain\\
and\\
Departamento de Matem\'atica Aplicada, Instituto de Matem\'atica\\
Universidade Federal do Rio de Janeiro\\
Caixa Postal 68530, Rio de Janeiro, 21941-909, Brazil

\smallskip
{\it e-mail}: ncbernardesjr@gmail.com

\bigskip
{\sc Fernanda M. Vasconcellos}

\medskip
Instituto do Noroeste Fluminense de Educa\c c\~ao Superior\\
Universidade Federal Fluminense, Av.\ Jo\~ao Jasbick, s/n$^\text{o}$\\
Santo Ant\^onio de P\'adua, RJ 28470-000, Brazil

\smallskip
{\it e-mail}: fernandamv@id.uff.br
\end{center}

\begin{abstract}
We establish complete characterizations of the notion of Li-Yorke chaos for weighted composition operators 
on $C_0(X)$ spaces and on $L^p(\mu)$ spaces. 
As a consequence, we obtain simple characterizations of the Li-Yorke chaotic weighted shifts 
on $c_0$ and on $\ell^p$ ($1 \leq p < \infty$) that complement previously known results. 
We also investigate the notion of Li-Yorke chaos for weighted shifts on Fr\'echet sequence spaces.
As applications, we obtain characterizations of the Li-Yorke chaotic weighted shifts on K\"othe sequence spaces 
depending only on the entries of the K\"othe matrix and the weights of the shift.
\end{abstract}


\section{Introduction}

Chaos theory is a very popular branch of the area of dynamical systems. 
Among the many notions of chaos studied so far, Li-Yorke chaos has a prominent place, not only because it was the first notion of chaos to appear explicitly in the mathematical literature, but also because it motivated a whole class of interesting notions of chaos based on the behaviors of pairs of points (dense chaos, generic chaos, distributional chaos of types 1, 2 and 3, and so on). 
Moreover, it has interesting relationships with other fundamental notions in dynamics, including the notion of topological entropy.

\smallskip
Given a metric space $M$ with metric $d$, recall that a continuous map $f : M \to M$ is said to be {\em Li-Yorke chaotic} if there is an uncountable set $S \subset M$ (a {\em scrambled set} for $f$) such that each pair $(x,y)$ of distinct points in $S$ is a {\em Li-Yorke pair} for $f$, in the sense that
\[
\liminf_{n \to \infty} d(f^n(x),f^n(y)) = 0 \ \ \text{ and } \ \ \limsup_{n \to \infty} d(f^n(x),f^n(y)) > 0.
\]

On the other hand, the area of linear dynamics has witnessed a great development during the last few decades and many interesting connections to other branches of dyna\-mics, such as topological dynamics, ergodic theory, differentiable dynamics and symbolic dynamics, have been established. Moreover, especially through the study of the dynamics of semigroups of operators, many applications to other areas of science have been found, including applications to biology, engineering and physics. For an overview of the area of linear dynamics, we refer the reader to the books \cite{FBayEMat09,KGroAPer11} and to the more recent papers \cite{FBayEMat16,BerBonPer20,NBerAMes20,NBerAMes21,NBerAPer24,BesMenPerPui19,EDAnMMai23,GriMatMen21}, 
where many additional references can be found.

\smallskip
In the setting of linear dynamics, an extensive study of Li-Yorke chaos was developed in \cite{BerBonMarPer11,BerBonMulPer15}.
In particular, the following useful characterizations were obtained: 
{\it For any continuous linear operator $T$ on any Fr\'echet space $X$, the following assertions are equivalent: 
\begin{itemize}
\item [\rm (i)] $T$ is Li-Yorke chaotic;
\item [\rm (ii)] $T$ admits a {\em semi-irregular vector}, that is, a vector $x \in X$ such that the sequence $(T^n x)_{n \in \N}$
  does not converge to zero but has a subsequence converging to zero.
\item [\rm (iii)] $T$ admits an {\em irregular vector}, that is, a vector $x \in X$ such that the sequence $(T^n x)_{n \in \N}$
  is unbounded but has a subsequence converging to zero.
\end{itemize}}

Composition operators appear naturally in many different contexts (analytic, measure-theoretic, topological, etc) 
and so they actually constitute a family of classes of operators. 
They play a very important role in operator theory and its applications. 
The dynamics of such operators has been investigated by several authors in different settings 
(see \cite{BayDarPir18,LBerAMon95,BerBonMulPer13,BonKalPer21,BonDAnDarPia22,PBouJSha97,DAnDarMai21,
DAnDarMai22,UDarBPir21,EGalAMon04,KGro00,TKal07,TKal19,JSha93}, for instance). 
In particular, a detailed study of Li-Yorke chaos for composition operators on $L^p(\mu)$ spaces was developed in \cite{BerDarPir20}. 

\smallskip
In the present note we will investigate the notion of Li-Yorke chaos for weighted composition operators
\[
C_{w,f}(\varphi)\!:= (\varphi \circ f) \cdot w
\]
on $C_0(X)$ spaces (Section~\ref{Section2}) and on $L^p(\mu)$ spaces (Section~\ref{Section3}). 
Our goal is to obtain complete characterizations of Li-Yorke chaos for these operators (Theorems~\ref{MainLYCX} and~\ref{MainLY}). 
As applications, we will derive simple characterizations of the Li-Yorke chaotic weighted shifts (unilateral and bilateral) on the
classical Banach sequence spaces $c_0$ and $\ell^p$ (Corollaries~\ref{CorLYCX1}, \ref{CorLYCX2}, \ref{CorLY2} and~\ref{CorLY3}) 
that complement previously known results. 
In the case of $L^p(\mu)$ spaces, our results generalize previous results from \cite{BerDarPir20} in the unweighted case 
and we were strongly motivated by this previous work. 

\smallskip
In the final Section~\ref{Section4} we will study the notion of Li-Yorke chaos for weighted shifts on Fr\'echet sequence spaces.
As applications, we will obtain characterizations of the Li-Yorke chaotic weighted shifts on K\"othe sequence spaces 
that are of a computational character, that is, depend only on the entries of the K\"othe matrix and the weights of the shift
(Theorems~\ref{Kothe-LY-T1} and~\ref{Kothe-LY-T2}).

\smallskip
Throughout $\K$ will denote either the field $\R$ of real numbers or the field $\C$ of complex numbers.
Moreover, $\N$ will denote the set of positive integers $\{1,2,3,\ldots\}$.


\section{Weighted composition operators on $C_0(X)$ spaces}\label{Section2}

In this section we fix a locally compact Hausdorff space $X$ and let $C_0(X)$ denote the Banach space over $\K$ of all continuous maps 
$\varphi: X \to \K$ that vanish at infinity endowed with the supremum norm 
\[
\|\varphi\|\!:= \sup_{x \in X} |\varphi(x)|.
\]
More generally, for any subset $B$ of $X$, we define
\[
\|\varphi\|_B\!:= \sup_{x \in B} |\varphi(x)|,
\]
where we consider this supremum to be $0$ whenever $B = \varnothing$.
Given a map $\varphi : X \to \K$, we denote by $\supp \varphi$ its {\em support}, that is,
\[
\supp \varphi\!:= \ov{\{x \in X : \varphi(x) \neq 0\}}.
\]
We also fix a {\em weight function} $w : X \to \K$, that is, a continuous map such that
\begin{equation}\label{wf1}
\varphi \cdot w \in C_0(X) \ \text{ for all } \varphi \in C_0(X).
\end{equation}
It is not difficult to see that (\ref{wf1}) holds if and only if $w$ is bounded on $X$.
Given a continuous map $f : X \to X$, it is also not difficult to show that the weighted composition operator
\[
C_{w,f}(\varphi)\!:= (\varphi \circ f) \cdot w
\]
is a well-defined bounded linear operator on $C_0(X)$ if and only if
\begin{equation}\label{Condition1}
f^{-1}(K) \cap X_\eps \text{ is compact for every } \eps > 0 \text{ and } K \subset X \text{ compact}, 
\end{equation}
where $X_\eps\!:= \{x \in X : |w(x)| \geq \eps\}$. In particular, the (unweighted) composition operator
\[
C_f(\varphi)\!:= \varphi \circ f
\]
is a well-defined bounded linear operator on $C_0(X)$ if and only if 
\[
f^{-1}(K) \text{ is compact for every } K \subset X \text{ compact},
\]
that is, $f$ is a proper mapping.
Moreover, if $w \in C_0(X)$ then $C_{w,f}$ is a well-defined bounded linear operator on $C_0(X)$ for every continuous map $f : X \to X$.

\smallskip
For the remaining of this section, we fix a continuous map $f : X \to X$ satisfying condition (\ref{Condition1}). 
Hence, the weighted composition operator
\[
C_{w,f} : \varphi \in C_0(X) \mapsto (\varphi \circ f) \cdot w \in C_0(X)
\]
is a well-defined bounded linear operator. We associate to $w$ and $f$ the following sequence of continuous maps from $X$ into $\K$:
\[
w^{(1)}\!:= w \ \ \ \text{ and } \ \ \ w^{(n)}\!:= (w \circ f^{n-1}) \cdot\ldots\cdot (w \circ f) \cdot w \ \text{ for } n \geq 2.
\]
If $w$ is the constant function $1$, then so is $w^{(n)}$ for all $n \geq 1$.

\smallskip
We now establish the main result of this section, a characterization of the weighted composition operators on $C_0(X)$ that are Li-Yorke chaotic. 

\begin{theorem}\label{MainLYCX}
The weighted composition operator $C_{w,f}$ on $C_0(X)$ is Li-Yorke chaotic if and only if there is a sequence $(B_i)_{i \in \N}$ of relatively compact open subsets of $X$ such that the following conditions hold:
\begin{itemize}
\item [\rm (A)] There is an increasing sequence $(n_j)_{j \in \N}$ of positive integers such that
\[
\lim_{j \to \infty} \|w^{(n_j)}\|_{f^{-n_j}(B_i)} = 0 \ \ \text{ for all } i \in \N.
\]
\item [\rm (B)] $\displaystyle \sup\big\{\|w^{(n)}\|_{f^{-n}(B_i)} : i,n \in \N\big\} = \infty$.
\end{itemize}
\end{theorem}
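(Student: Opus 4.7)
The plan is to apply the characterization recalled in the introduction: $C_{w,f}$ is Li--Yorke chaotic if and only if it admits an irregular vector, that is, a $\varphi \in C_0(X)$ such that $(C_{w,f}^n\varphi)_{n\in\N}$ is unbounded but has a subsequence converging to $0$. The theorem then reduces to a translation between such a vector and a sequence $(B_i)_{i\in\N}$ satisfying (A) and (B).

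For the necessity direction, given an irregular vector $\varphi$, I would set
\[
B_i\!:=\bigl\{x \in X : |\varphi(x)| > 1/i\bigr\}.
\]
Each $B_i$ is open by continuity of $\varphi$, and its closure is contained in $\{|\varphi|\geq 1/i\}$, which is compact because $\varphi$ vanishes at infinity; hence $B_i$ is relatively compact. Condition (A) is immediate from the bound
\[
\|w^{(n)}\|_{f^{-n}(B_i)} \leq i\,\|C_{w,f}^n\varphi\|,
\]
obtained by noting that $|\varphi(f^n(x))| > 1/i$ whenever $x \in f^{-n}(B_i)$, and applied along a subsequence $(n_j)$ with $C_{w,f}^{n_j}\varphi \to 0$. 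For (B), since $|C_{w,f}^n\varphi|$ lies in $C_0(X)$ it attains its supremum at some $x_n \in X$; the point $y_n := f^n(x_n)$ satisfies $|\varphi(y_n)| > 0$, so $y_n \in B_{i_n}$ for some $i_n$, and
\[
\|w^{(n)}\|_{f^{-n}(B_{i_n})} \geq |w^{(n)}(x_n)| = \|C_{w,f}^n\varphi\|/|\varphi(y_n)| \geq \|C_{w,f}^n\varphi\|/\|\varphi\|,
\]
so unboundedness of the orbit of $\varphi$ yields (B).

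For sufficiency, my plan is to build $\varphi = \sum_{k\geq 1} c_k \varphi_k$ by induction, interleaving (A) and (B). Fix a sequence $(m_j)$ from (A). At stage $k$, once $c_l,\varphi_l,m_{j_l}$ have been chosen for $l < k$, I would: (i) pick $c_k > 0$ small enough that $c_k\,\|w\|_X^{m_{j_s}} < 2^{-k-s}$ for every $s < k$; (ii) apply (B) to select $(i_k,n_k,x_k)$ with $|w^{(n_k)}(x_k)| > 2^k/c_k$ and $y_k := f^{n_k}(x_k) \in B_{i_k}$; (iii) use Urysohn's lemma (valid since $X$ is locally compact Hausdorff) to construct $\varphi_k \in C_0(X)$ with compact support in $B_{i_k}$, $\|\varphi_k\| = 1$, $\varphi_k(y_k) = 1$, and $\supp\varphi_k$ disjoint from the earlier supports; (iv) use (A) to pick $m_{j_k} > m_{j_{k-1}}$ with $\|w^{(m_{j_k})}\|_{f^{-m_{j_k}}(B_{i_l})} < 2^{-k}$ for $l \leq k$. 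Pairwise disjointness of the supports makes the evaluations decouple: at $x_k$ only the $k$-th summand contributes, so $|C_{w,f}^{n_k}\varphi(x_k)| = c_k\,|w^{(n_k)}(x_k)| > 2^k$, which proves the orbit is unbounded. For the subsequence going to $0$, I would split
\[
\|C_{w,f}^{m_{j_s}}\varphi\| \leq \sum_{l\leq s} c_l\,\|w^{(m_{j_s})}\|_{f^{-m_{j_s}}(B_{i_l})} + \sum_{l>s} c_l\,\|w\|_X^{m_{j_s}},
\]
where the head is driven to $0$ by step (iv) and the tail by step (i).

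I expect the main obstacle to be ensuring the disjointness required in step (iii): the triple $(i_k,n_k,x_k)$ is dictated by (B), and one cannot in general perturb $y_k$ within $f^{-n_k}(B_{i_k})$, because the points where $|w^{(n_k)}|$ is large may all be sent by $f^{n_k}$ to a single point of $B_{i_k}$ that has already been used. I would resolve this with a case split: if infinitely many distinct peak points $y_k$ are available, Hausdorff separation on the locally compact space $X$ lets me pick disjoint compact neighborhoods inside the respective $B_{i_k}$; otherwise some $y^*$ is realised as $y_k$ for infinitely many $k$, and then a single Urysohn bump supported near $y^*$ inside some $B_i \ni y^*$ is already irregular, since the orbit grows unboundedly along those $n_k$ while (A) forces $\|C_{w,f}^{m_j}\varphi\| \to 0$.
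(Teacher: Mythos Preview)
Your necessity argument is essentially identical to the paper's: both define $B_i := \{|\psi| > 1/i\}$ for an irregular vector $\psi$ and extract (A) and (B) from the irregularity in the same way.

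For sufficiency, the paper takes a much shorter route that sidesteps your disjointness obstacle entirely. It lets $Y$ be the closed linear span of $\bigcup_i \{\varphi \in C_0(X) : \supp\varphi \subset B_i\}$; condition~(A) makes the set of $\varphi \in Y$ whose orbit has a subsequence converging to zero dense in $Y$ (hence residual, by a standard lemma), while condition~(B) together with Urysohn bumps shows $\sup_n \|(C_{w,f})^n|_Y\| = \infty$, so Banach--Steinhaus makes the set of $\varphi \in Y$ with unbounded orbit residual as well. Intersecting the two residual sets yields an irregular vector. This Baire-category argument requires no inductive bookkeeping and no separation of supports.

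Your explicit construction is a legitimate alternative, but the disjointness step in your Case~1 has a genuine gap: knowing that the peak points $y_k$ are pairwise distinct does \emph{not} guarantee pairwise disjoint compact neighborhoods in a locally compact Hausdorff space---the $y_k$ could accumulate on one of the earlier $y_l$, and then any compact neighborhood of that $y_l$ already chosen swallows infinitely many later $y_k$. Since the $y_k$ are dictated by~(B), you have no freedom to avoid this. The easy repair is to drop disjointness altogether: take each $\varphi_k$ to be a \emph{nonnegative} Urysohn bump with $\varphi_k(y_k) = 1$ and $\supp\varphi_k \subset B_{i_k}$, and keep all $c_k > 0$. Then $\varphi(y_k) = \sum_l c_l \varphi_l(y_k) \geq c_k \varphi_k(y_k) = c_k$ regardless of overlaps, so $\|C_{w,f}^{n_k}\varphi\| \geq c_k |w^{(n_k)}(x_k)| > 2^k$ as desired, and your tail/head estimate for $\|C_{w,f}^{m_{j_s}}\varphi\|$ goes through unchanged. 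With this fix your constructive approach succeeds, and the entire case split becomes unnecessary.
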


\begin{proof}
Suppose that there is a sequence $(B_i)_{i \in \N}$ with all the properties described in the statement of the theorem.
For each $i \in \N$, let $Y_i\!:= \{\varphi \in C_0(X) : \supp \varphi \subset B_i\}$, which is a vector subspace of $C_0(X)$. 
Let $Y$ be the closed linear span of $\bigcup_{i=1}^\infty Y_i$ in $C_0(X)$. 
By condition~(A), for each $i \in \N$ and each $\varphi \in Y_i$,
\[
\|(C_{w,f})^{n_j}(\varphi)\| = \|(\varphi \circ f^{n_j}) \cdot w^{(n_j)}\|_{f^{-n_j}(B_i)} \leq \|\varphi\| \|w^{(n_j)}\|_{f^{-n_j}(B_i)}
 \to 0 \text{ as } j \to \infty.
\]
Hence, the set of all $\varphi \in Y$ whose orbit under $C_{w,f}$ has a subsequence converging to $0$ is dense in $Y$, 
and so it is residual in $Y$ \cite[Proposition~3]{BerBonMulPer15}. 
On the other hand, given $i,n \in \N$ with $f^{-n}(B_i) \neq \emptyset$, choose $a_{i,n} \in f^{-n}(B_i)$ such that
\[
|w^{(n)}(a_{i,n})| \geq \|w^{(n)}\|_{f^{-n}(B_i)} - 1.
\]
By Urysohn's lemma, there is a continuous map $\phi_{i,n} : X \to [0,1]$ such that
\[
\supp \phi_{i,n} \subset B_i \ \ \ \text{ and } \ \ \ \phi_{i,n}(f^n(a_{i,n})) = 1.
\]
Hence, $\phi_{i,n} \in Y_i \subset Y$, $\|\phi_{i,n}\| = 1$ and
\[
\|(C_{w,f})^n(\phi_{i,n})\| \geq |\phi_{i,n}(f^n(a_{i,n})) w^{(n)}(a_{i,n})| \geq \|w^{(n)}\|_{f^{-n}(B_i)} - 1.
\]
Thus, condition~(B) implies that the sequence $(\|(C_{w,f})^n|_Y\|)_{n \in \N}$ is unbounded, and so the Banach-Steinhaus theorem \cite[Theorem~2.5]{WRud91} guarantees that the set of all $\varphi \in Y$ whose orbit under $C_{w,f}$ is unbounded is also residual in $Y$. 
This shows the existence of an irregular vector for $C_{w,f}$, proving that $C_{w,f}$ is Li-Yorke chaotic.

\smallskip
Conversely, suppose that the operator $C_{w,f}$ is Li-Yorke chaotic. Then it admits an irregular vector $\psi \in C_0(X)$. 
For each $i \in \N$, let
\[
B_i\!:= \{x \in X : |\psi(x)| > i^{-1}\},
\]
which is a relatively compact open subset of $X$.  
Since $\psi$ is an irregular vector for $C_{w,f}$, there is an increasing sequence $(n_j)_{j \in \N}$ of positive integers such that
$\|(C_{w,f})^{n_j}(\psi)\| \to 0$ as $j \to \infty$. Since, for each $i \in \N$,
\[
\|(C_{w,f})^{n_j}(\psi)\| \geq \|(\psi \circ f^{n_j}) \cdot w^{(n_j)}\|_{f^{-n_j}(B_i)} \geq i^{-1} \|w^{(n_j)}\|_{f^{-n_j}(B_i)},
\]
we obtain condition (A). Now, if 
\[
\beta\!:= \sup\big\{\|w^{(n)}\|_{f^{-n}(B_i)} : i,n \in \N\big\} < \infty,
\]
then
\[
\|(C_{w,f})^n(\psi)\| = \sup_{i \in \N} \|(\psi \circ f^n) \cdot w^{(n)}\|_{f^{-n}(B_i)} \leq \beta \|\psi\| \ \text{ for all } n \in \N,
\]
contradicting the fact that $\psi$ is an irregular vector for $C_{w,f}$. This proves condition (B).
\end{proof}

\begin{corollary}
A composition operator $C_f$ on $C_0(X)$ is never Li-Yorke chaotic.
\end{corollary}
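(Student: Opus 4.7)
The plan is to derive this as an immediate corollary of Theorem~\ref{MainLYCX} applied to the special case $w \equiv 1$. The key observation, already recorded in the paper just before the theorem, is that when $w$ is the constant function $1$, each iterated weight $w^{(n)}$ is also the constant function $1$.

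Consequently, for every relatively compact open set $B \subset X$ and every $n \in \N$, the quantity $\|w^{(n)}\|_{f^{-n}(B)}$ equals either $1$ (if $f^{-n}(B) \neq \varnothing$) or $0$ (if $f^{-n}(B) = \varnothing$), by the convention fixed at the beginning of Section~\ref{Section2}. In particular, for any sequence $(B_i)_{i \in \N}$ of relatively compact open subsets of $X$,
\[
\sup\big\{\|w^{(n)}\|_{f^{-n}(B_i)} : i,n \in \N\big\} \leq 1 < \infty,
\]
so condition~(B) of Theorem~\ref{MainLYCX} can never be satisfied. By Theorem~\ref{MainLYCX}, $C_f$ is not Li-Yorke chaotic.

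There is no real obstacle here; the corollary is essentially a one-line consequence of Theorem~\ref{MainLYCX}, reflecting the fact that without a weight, the orbit norms under $C_f$ cannot blow up while simultaneously admitting a subsequence tending to zero along a fixed vector, which is precisely what an irregular vector would require.
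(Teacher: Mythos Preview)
Your proof is correct and is exactly the intended argument: the paper states the corollary without proof precisely because condition~(B) of Theorem~\ref{MainLYCX} is trivially violated when $w^{(n)}\equiv 1$, as you observe. Your last paragraph is commentary rather than proof, but the mathematical content is fine.
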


\begin{remark}
Note that the sequence $(B_i)_{i \in \N}$ constructed in the proof of the necessity of the conditions in Theorem~\ref{MainLYCX} 
has the following additional property:
\begin{itemize}
\item [(C)] $\ov{B_i} \subset B_{i+1}$ for all $i \in \N$.
\end{itemize}
If we include this additional condition in the statement of Theorem~\ref{MainLYCX}, then we can replace condition (A) by the following weaker condition:
\begin{itemize}
\item [(A')] $\displaystyle \liminf_{n \to \infty} \|w^{(n)}\|_{f^{-n}(B_i)} = 0$ for all $i \in \N$.
\end{itemize}
In other words: {\it The weighted composition operator $C_{w,f}$ on $C_0(X)$ is Li-Yorke chaotic if and only if there is a sequence 
$(B_i)_{i \in \N}$ of relatively compact open subsets of $X$ satisfying conditions (A'), (B) and (C).}
Indeed, since the necessity is clear, let us prove the sufficiency. If
\[
\limsup_{n \to \infty} \|w^{(n)}\|_{f^{-n}(B_j)} > 0 \ \ \text{ for a certain } j \in \N,
\]
take a continuous map $\psi : X \to [0,1]$ with $\supp \psi \subset B_{j+1}$ and $\psi = 1 \text{ on } \ov{B_j}$. Since
\[
\|w^{(n)}\|_{f^{-n}(B_j)} \leq \|(C_{w,f})^n(\psi)\| \leq \|w^{(n)}\|_{f^{-n}(B_{j+1})} \ \ \text{ for all } n \in \N,
\]
we see that $\psi$ is a semi-irregular vector for $C_{w,f}$, and so $C_{w,f}$ is Li-Yorke chaotic. Assume
\[
\lim_{n \to \infty} \|w^{(n)}\|_{f^{-n}(B_i)} = 0 \ \ \text{ for all } i \in \N.
\]
Then condition (A) holds with $(n_j)_{j \in \N}\!:= (n)_{n \in \N}$. 
Since we are assuming that condition (B) holds, $C_{w,f}$ is Li-Yorke chaotic by Theorem~\ref{MainLYCX}.
\end{remark}

\begin{corollary}\label{CorLYCX1}
A unilateral weighted backward shift
\[
B_w : (x_1,x_2,x_3,\ldots) \in c_0(\N) \mapsto (w_1x_2,w_2x_3,\ldots) \in c_0(\N),
\]
where $w\!:= (w_n)_{n \in \N}$ is a bounded sequence of scalars, is Li-Yorke chaotic if and only if
\begin{equation}\label{EqLYCX1}
\sup\{|w_i \cdots w_j| : i,j \in \N, i \leq j\} = \infty.
\end{equation}
\end{corollary}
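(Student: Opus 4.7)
The plan is to deduce the corollary from Theorem~\ref{MainLYCX} by regarding the unilateral weighted backward shift as a weighted composition operator on $C_0(X)$ with $X := \N$ equipped with the discrete topology. Under the natural identification $c_0(\N) = C_0(\N)$, take $f : \N \to \N$ given by $f(n) := n+1$ and the weight function $n \mapsto w_n$ (bounded by hypothesis, so that (\ref{wf1}) and (\ref{Condition1}) hold trivially, since compact subsets of $\N$ are finite). Then $C_{w,f}(\varphi)(n) = w_n\varphi(n+1) = (B_w\varphi)(n)$, and a one-line induction gives $w^{(n)}(k) = w_k w_{k+1}\cdots w_{k+n-1}$. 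Because $\N$ is discrete, the relatively compact open subsets are exactly the finite ones, and $f^{-n}(B) = \{k \in \N : k+n \in B\}$ for every $B \subset \N$.

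For the sufficiency of (\ref{EqLYCX1}), I would choose the canonical sequence $B_i := \{1,2,\ldots,i\}$. Then $f^{-n}(B_i) = \{1,\ldots,i-n\}$ when $n < i$ and $f^{-n}(B_i) = \varnothing$ when $n \geq i$, so $\|w^{(n)}\|_{f^{-n}(B_i)} = 0$ eventually; hence condition~(A) of Theorem~\ref{MainLYCX} holds trivially with $n_j := j$. For condition~(B), given any $M > 0$, use (\ref{EqLYCX1}) to find $p \leq q$ with $|w_p\cdots w_q| > M$, and set $n := q-p+1$ and $i := q+1$; then $p \in f^{-n}(B_i)$ and $|w^{(n)}(p)| = |w_p\cdots w_q| > M$, so the supremum in~(B) is infinite.

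For the necessity, assume $B_w$ is Li-Yorke chaotic and apply Theorem~\ref{MainLYCX} to obtain finite subsets $B_i \subset \N$ for which condition~(B) holds. Fix $N_i \in \N$ with $B_i \subset \{1,\ldots,N_i\}$; then $f^{-n}(B_i) \subset \{1,\ldots,N_i\}$, and each value $|w^{(n)}(k)| = |w_k w_{k+1}\cdots w_{k+n-1}|$ with $k \in f^{-n}(B_i)$ is a product of the form appearing on the left-hand side of (\ref{EqLYCX1}). The unboundedness furnished by~(B) therefore transfers directly to (\ref{EqLYCX1}).

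No real obstacle is expected: this is essentially a transcription of Theorem~\ref{MainLYCX} to the discrete setting. The only slightly delicate point is to make the right choice of $(B_i)$ on the sufficiency side so that condition~(A) becomes automatic, and the natural choice $B_i := \{1,\ldots,i\}$ works precisely because the preimages under $f(n)=n+1$ eventually become empty.
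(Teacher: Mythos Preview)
Your proposal is correct and follows essentially the same route as the paper: identify $B_w$ with $C_{w,f}$ on $C_0(\N)$ for $f(n)=n+1$, observe that condition~(A) is automatic because preimages of finite sets eventually become empty, and then match condition~(B) with~(\ref{EqLYCX1}). The only cosmetic difference is that the paper picks the singletons $B_i:=\{i\}$ while you take initial segments $B_i:=\{1,\ldots,i\}$; both choices work for the same reason.
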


\begin{proof}
Consider $X\!:= \N$ endowed with the discrete topology and $f : n \in \N \mapsto n+1 \in \N$.
Then $C_0(X) = c_0(\N)$ and $C_{w,f} = B_w$. 
Since, for each $B \subset X$ finite, $f^{-n}(B) = \varnothing$ for all sufficiently large $n$, we have that condition (A) is superfluous in the present case. If (\ref{EqLYCX1}) fails, that is, $\beta\!:= \sup\{|w_i \cdots w_j| : i,j \in \N, i \leq j\} < \infty$, then
\[
\|w^{(n)}\|_{f^{-n}(A)} = \sup_{i \in A, i > n} |w^{(n)}(i-n)| = \sup_{i \in A, i > n} |w_{i-n} \cdots w_{i-1}| \leq \beta
\]
for all $A \subset \N$, and so condition (B) fails no matter how we choose the sequence $(B_i)_{i \in \N}$.
On the other hand, if (\ref{EqLYCX1}) is true, then condition (B) holds with $B_i\!:= \{i\}$ for each $i \in \N$.
\end{proof}

The above corollary was established in \cite[Proposition~27]{BerBonMarPer11} in the case $w$ is a bounded sequence of nonzero scalars.

\begin{corollary}\label{CorLYCX2}
A bilateral weighted backward shift
\[
B_w : (x_n)_{n \in \Z} \in c_0(\Z) \mapsto (w_n x_{n+1})_{n \in \Z} \in c_0(\Z),
\]
where $w\!:= (w_n)_{n \in \Z}$ is a bounded sequence of nonzero scalars, is Li-Yorke chaotic if and only if the following conditions hold:
\begin{itemize}
\item [\rm (a)] $\displaystyle \liminf_{n \to \infty} |w_{-n} \cdots w_{-1}| = 0$.
\item [\rm (b)] $\sup\{|w_i \cdots w_j| : i,j \in \Z, i \leq j\} = \infty$.
\end{itemize}
\end{corollary}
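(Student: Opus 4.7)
The plan is to translate the statement into the setting of Theorem~\ref{MainLYCX} by taking $X = \Z$ with the discrete topology and $f(n) = n+1$, so that $C_0(\Z) = c_0(\Z)$, the operator $C_{w,f}$ coincides with $B_w$, and the relatively compact open subsets of $X$ are exactly the finite subsets of $\Z$. Writing $P_m := |w_{-m} w_{-m+1} \cdots w_{-1}|$ for $m \geq 1$, the fundamental identity is
\[
\|w^{(n)}\|_{f^{-n}(B)} = \max_{k \in B}\,|w_{k-n}\, w_{k-n+1} \cdots w_{k-1}|,
\]
and once $n > |k|$ each term on the right is a fixed positive constant multiple of $P_{n-k}$, the constant depending only on $k$ and the values $w_0,\ldots,w_{k-1}$ or $w_k,\ldots,w_{-1}$ (here I use that the weights are nonzero, so division is allowed).

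For necessity I would apply Theorem~\ref{MainLYCX} directly to obtain finite sets $B_i \subset \Z$ and a sequence $(n_j)$ witnessing (A) and (B). Condition (b) falls out of (B) by the reparameterization $(i_0, j_0) := (k-n, k-1)$: every product $|w_{i_0}\cdots w_{j_0}|$ with $i_0 \leq j_0$ appears as some $|w_{k-n}\cdots w_{k-1}|$ by taking $k = j_0+1$ and $n = j_0 - i_0 + 1$, so the two suprema coincide. For condition (a), I fix any $k \in B_1$ and use (A): the quantity $|w_{k-n_j}\cdots w_{k-1}|$ tends to $0$, and by the preceding identity it equals a fixed positive constant times $P_{n_j - k}$, yielding $\liminf_m P_m = 0$.

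For sufficiency, the cleanest route is via the Remark rather than Theorem~\ref{MainLYCX} itself. I would take $B_i := \{-i, -i+1, \ldots, i\}$, so that (C) is automatic and (B) follows from (b) by the same reparameterization as above. The main obstacle is verifying (A'): for each fixed $i$, I must produce a subsequence along which the sliding-window max $\max_{|k| \leq i}|w_{k-n}\cdots w_{k-1}|$ tends to $0$. By the identity, this max is dominated by $D_i \cdot \max_{n-i \leq m \leq n+i} P_m$ for a constant $D_i$ depending only on $i$, so the real task is to convert the \emph{single} small value of $P$ granted by (a) into smallness on an entire window of length $2i+1$. This is where the boundedness of the weights (say $|w_n| \leq W$) enters: since $P_{m+1} = P_m \cdot |w_{-m-1}| \leq W\,P_m$, once $P_{m_j}$ is small the successors $P_{m_j+1},\ldots,P_{m_j+2i}$ are bounded by $W^{2i}P_{m_j}$ and are therefore also small. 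Re-centering the index by $N_j := m_j + i$ gives the desired window around $N_j$, verifies (A'), and yields Li-Yorke chaos from the Remark.
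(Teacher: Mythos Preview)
Your proposal is correct. The necessity direction matches the paper's argument essentially verbatim: both pick a point $k$ in one of the sets $B_i$, read off $|w_{k-n_j}\cdots w_{k-1}|\to 0$ from condition~(A), and convert this to $\liminf_m P_m = 0$ via the nonvanishing of the weights; and both derive~(b) from~(B) by the same reparameterization of the product indices.

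For sufficiency, however, you take a genuinely different route from the paper. The paper argues by a case split on $\limsup_n P_n$: if the $\limsup$ is positive, then $\rchi_{\{0\}}$ is already a semi-irregular vector and one is done; if $P_n \to 0$, then one takes the $B_i$ to be \emph{singletons} $\{k\}$ (enumerating $\Z$) and applies Theorem~\ref{MainLYCX} with $(n_j)=(n)$, so that each $\|w^{(n)}\|_{f^{-n}(B_i)}$ is a single term $c_k P_{n-k}\to 0$ and no window argument is needed. Your approach instead chooses nested intervals $B_i=\{-i,\dots,i\}$ and invokes the Remark with (A$'$), (B), (C), which forces you to control a \emph{maximum} of $2i+1$ consecutive $P_m$'s; you close this via the one-sided Lipschitz bound $P_{m+1}\le W\,P_m$ coming from the boundedness of $w$. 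Both arguments are valid. The paper's is shorter and avoids the window estimate altogether by exploiting singletons, at the cost of the explicit case split; yours is a single unified construction that works without distinguishing the $\limsup>0$ case, but needs the extra propagation step. Either way the boundedness and nonvanishing of the weights are used in exactly the places one would expect.
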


\begin{proof}
Consider $X\!:= \Z$ endowed with the discrete topology and $f : n \in \Z \mapsto n+1 \in \Z$.
Then $C_0(X) = c_0(\Z)$ and $C_{w,f} = B_w$. Suppose that (a) and (b) hold. If
\[
\limsup_{n \to \infty} |w_{-n} \cdots w_{-1}| > 0,
\]
then $\rchi_{\{0\}}$ is a semi-irregular vector for $C_{w,f}$, and so $C_{w,f}$ is Li-Yorke chaotic. If
\[
\lim_{n \to \infty} |w_{-n} \cdots w_{-1}| = 0,
\]
then conditions (A) and (B) hold with $(n_j)_{j \in \N}\!:= (n)_{n \in \N}$ and $(B_i)_{i \in \N}$ an enumeration of the sets $\{k\}$ 
for $k \in \Z$, and so $C_{w,f}$ is Li-Yorke chaotic by Theorem~\ref{MainLYCX}.
Conversely, suppose that $C_{w,f}$ is Li-Yorke chaotic and let $(B_i)_{i \in \N}$ and $(n_j)_{j \in \N}$ be as in the statement of Theorem~\ref{MainLYCX}. By choosing $k \in B_i$ for some $i \in \N$, we have that
\[
|w_{-n_j + k} \cdots w_{-1 + k}| = \|w^{(n_j)}\|_{f^{-n_j}(\{k\})} \leq \|w^{(n_j)}\|_{f^{-n_j}(B_i)} \to 0 \text{ as } j \to \infty,
\]
which gives (a). If (b) is false, that is, $\beta\!:= \sup\{|w_i \cdots w_j| : i,j \in \Z, i \leq j\} < \infty$, then
\[
\|w^{(n)}\|_{f^{-n}(B_i)} = \sup_{k \in B_i} |w_{-n+k} \cdots w_{-1+k}| \leq \beta \ \ \text{ for all } i \in \N,
\]
contradicting condition (B).
\end{proof}

To the best of the authors knowledge, a characterization of the Li-Yorke chaotic bilateral weighted shifts on $c_0(\Z)$ has not been given before.

\begin{remark}\label{RemarkLYCX}
We can remove the hypothesis that the weights $w_n$ are {\em nonzero} in Corollary~\ref{CorLYCX2} provided we state the result as follows: 
{\it Consider a bilateral weighted backward shift
\[
B_w : (x_n)_{n \in \Z} \in c_0(\Z) \mapsto (w_n x_{n+1})_{n \in \Z} \in c_0(\Z),
\]
where $w\!:= (w_n)_{n \in \Z}$ is a bounded sequence of scalars. Let
\[
\beta^-\!:= \sup\{|w_i \cdots w_j| : i \leq j \leq -1\} \ \ \text{ and } \ \ \beta^+\!:= \sup\{|w_i \cdots w_j| : 0 \leq i \leq j\}.
\]
Then $B_w$ is Li-Yorke chaotic if and only if one of the following conditions hold:
\begin{itemize}
\item [\rm (I)]  $\beta^+ = \infty$ and $\,\displaystyle \liminf_{n \to \infty} |w_{-n} \cdots w_{-k}| = 0$ for some $k \in \Z$.
\item [\rm (II)] $\beta^+ < \infty$, $\beta^- = \infty$ and $\,\displaystyle \liminf_{n \to \infty} |w_{-n} \cdots w_{-k}| = 0$ for all $k \in \N$.
\end{itemize}}

Indeed, let $X$ and $f$ be as in the proof of Corollary~\ref{CorLYCX2}. Suppose that $C_{w,f}$ is Li-Yorke chaotic and let 
$(B_i)_{i \in \N}$ and $(n_j)_{j \in \N}$ be as in the statement of Theorem~\ref{MainLYCX}. Let 
\[
\beta\!:= \sup\{|w_i \cdots w_j| : i,j \in \Z, i \leq j\} \ \ \text{ and } \ \ B\!:= \bigcup_{i \in \N} B_i.
\]
We claim that
\begin{equation}\label{NotesLYCX1}
\liminf_{n \to \infty} |w_{-n} \cdots w_{-1 + \ell}| = 0 \ \ \text{ for all } \ell \in B.
\end{equation}
Indeed, take $\ell \in B_i$ for some $i \in \N$. Since
\[
|w_{-n_j + \ell} \cdots w_{-1 + \ell}| = \|w^{(n_j)}\|_{f^{-n_j}(\{\ell\})} \leq \|w^{(n_j)}\|_{f^{-n_j}(B_i)} \to 0 \ \text{ as } j \to \infty,
\]
we obtain (\ref{NotesLYCX1}). Moreover, we must have $\beta = \infty$. In fact, if $\beta < \infty$ then 
\[
\|w^{(n)}\|_{f^{-n}(B_i)} = \sup_{k \in B_i} |w_{-n+k} \cdots w_{-1+k}| \leq \beta \ \ \text{ for all } i \in \N,
\]
contradicting condition (B). If $\beta^+ = \infty$, then (\ref{NotesLYCX1}) implies that condition (I) holds.
Assume $\beta^+ < \infty$. Since $\beta \leq \max\{\beta^-, \beta^+, \beta^- \!\!\cdot\! \beta^+\}$, we must have $\beta^- = \infty$. 
If $B$ is unbounded below, then (\ref{NotesLYCX1}) implies that (II) holds. Assume $B$ bounded below and let $m\!:= \min B$. 
We claim that
\begin{equation}\label{NotesLYCX3}
w_n \neq 0 \ \ \text{ for all } n < m.
\end{equation}
Indeed, assume $w_t = 0$ for a certain $t < m$ and let $C\!:= \sup\{|w_i \cdots w_j| : t < i \leq j\}$.
Since $\beta^+ < \infty$, we have that $C < \infty$. Since $t < m = \min B$ and $w_t = 0$,
\[
\|w^{(n)}\|_{f^{-n}(B_i)} = \sup_{k \in B_i} |w_{-n+k} \cdots w_{-1+k}| \leq C \ \text{ for all } i \in \N.
\]
This contradiction proves our claim. By (\ref{NotesLYCX1}), $\liminf_{n \to \infty} |w_{-n} \cdots w_{-1 + m}| = 0$.
Hence, (\ref{NotesLYCX3}) implies that (II) holds.

\smallskip
We now prove the converse. We can replace the ``$\liminf$'' in (I) and (II) by ``$\lim$'', since
\[
\liminf_{n \to \infty} |w_{-n} \cdots w_{-k}| = 0 \ \ \text{ and } \ \ \limsup_{n \to \infty} |w_{-n} \cdots w_{-k}| > 0
\]
imply that $\rchi_{\{-k+1\}}$ is a semi-irregular vector for $C_{w,f}$, and so $C_{w,f}$ is Li-Yorke chaotic.

\smallskip
If (I) holds, let $B_i\!:= \{-k+i\}$ for $i \in \N$. Since $\lim_{n \to \infty} |w_{-n} \cdots w_{-k}| = 0$,
\[
\lim_{n \to \infty} \|w^{(n)}\|_{f^{-n}(B_i)} = \lim_{n \to \infty} |w_{-n-k+i} \cdots w_{-1-k+i}| = 0 \ \text{ for all } i \in \N.
\]
Moreover, condition (B) holds because $\beta^+ = \infty$. Thus, $C_{w,f}$ is Li-Yorke chaotic. 

\smallskip
If (II) holds, let $B_i\!:= \{-i\}$ for $i \in \N$. Since $\lim_{n \to \infty} |w_{-n} \cdots w_{-k}| = 0$ for all $k \in \N$, 
\[
\lim_{n \to \infty} \|w^{(n)}\|_{f^{-n}(B_i)} = \lim_{n \to \infty} |w_{-n-i} \cdots w_{-1-i}| = 0 \ \text{ for all } i \in \N.
\]
Since $\beta^- = \infty$, condition (B) holds. Thus, $C_{w,f}$ is Li-Yorke chaotic. 
\end{remark}


\section{Weighted composition operators on $L^p(\mu)$ spaces}\label{Section3}

In this section we fix $p \in [1,\infty)$ and an arbitrary positive measure space $(X,\fM,\mu)$.
As usual, $L^p(\mu)$ denotes the Banach space over $\K$ of (equivalence classes of) $\K$-valued $p$-integrable functions on $(X,\fM,\mu)$ endowed with the $p$-norm 
\[
\|\varphi\|_p\!:= \left(\int_X |\varphi|^p\, d\mu\right)^\frac{1}{p}.
\]
We also fix a {\em weight function} $w : X \to \K$, which now means a measurable map such that
\begin{equation}\label{wf2}
\varphi \cdot w \in L^p(\mu) \ \text{ for all } \varphi \in L^p(\mu).
\end{equation}
The next result characterizes the weight functions in the case $\mu$ is semifinite.
This fact is probably known, but we present it below for the sake of completeness.

\begin{proposition}
If the measure $\mu$ is semifinite, that is, every set with infinite measure contains a set with positive finite measure 
(in particular, if $\mu$ is $\sigma$-finite), then (\ref{wf2}) holds if and only if $w \in L^\infty(\mu)$.
\end{proposition}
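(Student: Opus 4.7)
The plan is to prove the nontrivial direction by viewing the hypothesis as saying that the multiplication operator $M_w : \varphi \mapsto \varphi \cdot w$ is a well-defined linear map $L^p(\mu) \to L^p(\mu)$, and then deduce its boundedness from the closed graph theorem. The easy direction ($w \in L^\infty(\mu) \Rightarrow M_w$ well-defined) is immediate since $|\varphi \cdot w|^p \leq \|w\|_\infty^p |\varphi|^p$ almost everywhere, and does not even require semifiniteness.

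For the hard direction, I first check that the graph of $M_w$ is closed: if $\varphi_n \to \varphi$ in $L^p(\mu)$ and $\varphi_n w \to \psi$ in $L^p(\mu)$, then extracting an a.e.\ convergent subsequence from each gives $\varphi_n w \to \varphi w$ pointwise a.e., forcing $\psi = \varphi w$. The closed graph theorem then produces a constant $C \geq 0$ with $\|\varphi w\|_p \leq C \|\varphi\|_p$ for every $\varphi \in L^p(\mu)$.

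The main step, and really the only place semifiniteness is used, is to upgrade this bound to $\|w\|_\infty \leq C$. Suppose for contradiction that $\|w\|_\infty > C$; then for some $C' > C$ the set $A\!:= \{x \in X : |w(x)| > C'\}$ has $\mu(A) > 0$. By semifiniteness (applied to $A$ if $\mu(A) = \infty$, and taken as $A$ itself otherwise) I can choose a measurable $E \subset A$ with $0 < \mu(E) < \infty$. The normalized indicator $\varphi\!:= \rchi_E / \mu(E)^{1/p}$ satisfies $\|\varphi\|_p = 1$, while
\[
\|\varphi \cdot w\|_p^p = \frac{1}{\mu(E)} \int_E |w|^p \, d\mu \geq (C')^p,
\]
contradicting $\|M_w\| \leq C$. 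This yields $w \in L^\infty(\mu)$ with $\|w\|_\infty \leq C$.

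The only subtle point I anticipate is the appeal to the closed graph theorem, which requires knowing $M_w$ is everywhere defined on the Banach space $L^p(\mu)$; this is exactly the content of the hypothesis (\ref{wf2}). The rest consists of standard measure-theoretic manipulations, with semifiniteness entering solely to extract a subset of finite positive measure inside $\{|w| > C'\}$, which is precisely where a purely finitely-additive or non-semifinite example would allow $w$ to be unbounded on a ``large'' set invisible to $L^p$.
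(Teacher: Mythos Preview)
Your proof is correct but takes a genuinely different route from the paper's. The paper argues the hard direction by an explicit construction: assuming $w \notin L^\infty(\mu)$, it slices $X$ into the level sets $A_n = \{n^2 \le |w| < (n+1)^2\}$, uses semifiniteness to pick $B_n \subset A_n$ of finite positive measure for infinitely many $n$, and assembles the function $\varphi = \sum_n (n^{2p}\mu(B_n))^{-1/p}\rchi_{B_n}$, which lies in $L^p(\mu)$ while $\varphi w$ does not. Your argument instead invokes the closed graph theorem to get a uniform bound $\|M_w\| \le C$ and then tests against a single normalized indicator on a set where $|w| > C'$, with semifiniteness used only once to make that indicator live in $L^p$. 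Your approach is shorter and more conceptual, and it yields the extra information that $\|w\|_\infty$ equals the operator norm of $M_w$; the paper's approach is more elementary and self-contained, avoiding the closed graph theorem and the subsequence extraction, and producing an explicit witness $\varphi$ to the failure of (\ref{wf2}). Both are standard and either would be acceptable here.
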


\begin{proof}
Since the sufficiency of the condition is clear, let us prove its necessity. 
For this purpose, suppose that $w \not\in L^\infty(\mu)$. For each $n \in \N$, let
\[
A_n\!:= \{x \in X : n^2 \leq |w(x)| < (n+1)^2\} \in \fM.
\]
Let $I\!:= \{n \in \N : \mu(A_n) > 0\}$. Since $w \not\in L^\infty(\mu)$, $I$ is an infinite set.
For each $n \in I$, let $B_n \in \fM$ be such that $B_n \subset A_n$ and $0 < \mu(B_n) < \infty$. Define
\[
\varphi\!:= \sum_{n \in I} \Big(\frac{1}{n^{2p} \mu(B_n)}\Big)^\frac{1}{p} \rchi_{B_n}.
\]
Note that $\varphi \in L^p(\mu)$, since the above series is absolutely convergent:
\[
\sum_{n \in I} \Big\|\Big(\frac{1}{n^{2p} \mu(B_n)}\Big)^\frac{1}{p} \rchi_{B_n}\Big\|_p
= \sum_{n \in I} \Big(\int_X \frac{1}{n^{2p} \mu(B_n)} \rchi_{B_n} d\mu\Big)^\frac{1}{p}
= \sum_{n \in I} \frac{1}{n^2} < \infty.
\]
However,
\[
\int_X |\varphi \cdot w|^p d\mu = \sum_{n \in I} \frac{1}{n^{2p} \mu(B_n)} \int_{B_n} |w|^p d\mu
\geq \sum_{n \in I} \frac{1}{n^{2p} \mu(B_n)} \cdot n^{2p} \mu(B_n)
= \sum_{n \in I} 1 = \infty,
\]
proving that $\varphi \cdot w \not\in L^p(\mu)$. This shows that (\ref{wf2}) fails.
\end{proof}

\begin{remark}
The hypothesis that $\mu$ is semifinite cannot be omitted in the previous proposition. 
For instance, suppose that there is a sequence $(x_n)$ of pairwise distinct points in $X$ such that 
$\{x_n\} \in \fM$ and $\mu(\{x_n\}) = \infty$ for all $n \in \N$. 
Let $Y\!:= X \backslash \{x_1,x_2,\ldots\}$ and define $w : X \to \R$ by $w\!:= 1$ on $Y$ and $w(x_n)\!:= n$ for all $n \in \N$. 
Then $w \not\in L^\infty(\mu)$, but
\[
\varphi \in L^p(\mu) \ \ \Rightarrow \ \ \varphi(x_n) = 0 \ \forall n \in \N \ \ \Rightarrow \ \ \varphi \cdot w = \varphi \cdot \rchi_Y \in L^p(\mu).
\]
\end{remark}

Given a bimeasurable map $f : X \to X$ (i.e., $f(A) \in \fM$ and $f^{-1}(A) \in \fM$ for all $A \in \fM$), it is easy to show that the weighted composition operator
\[
C_{w,f}(\varphi)\!:= (\varphi \circ f) \cdot w
\]
is a well-defined bounded linear operator on $L^p(\mu)$ if and only if there exists a constant $c \in (0,\infty)$ such that
\begin{equation}\label{condition}
\int_A |w|^p d\mu \leq c\, \mu(f(A)) \ \textrm{ for every } A \in \fM.
\end{equation}
Moreover, in this case, $\|C_{w,f}\| \leq c^\frac{1}{p}$.

\smallskip
For the remaining of this section, we fix a bimeasurable map $f : X \to X$ satisfying condition (\ref{condition}).
Hence, the weighted composition operator
\[
C_{w,f} : \varphi \in L^p(\mu) \mapsto (\varphi \circ f) \cdot w \in L^p(\mu)
\]
is a well-defined bounded linear operator. If $w$ is the constant function $1$, then we obtain the (unweighted) composition operator
\[
C_f : \varphi \in L^p(\mu) \mapsto \varphi \circ f \in L^p(\mu).
\]
We associate to $\mu$, $w$ and $f$, the following sequence of positive measures on $(X,\fM)$:
\[
\mu_1(A)\!:= \int_A |w|^p d\mu, \ \ \mu_n(A)\!:= \int_A |w \circ f^{n-1}|^p \cdots |w \circ f|^p |w|^p d\mu \ \ (A \in \fM, n \geq 2).
\]
If $w$ is the constant function $1$, then $\mu_n = \mu$ for all $n \geq 1$.

\begin{lemma}\label{LemmaC}
$\mu_n(A) \leq c^n \mu(f^n(A))$ for all $A \in \fM$ and $n \in \N$.
\end{lemma}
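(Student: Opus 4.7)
The plan is to proceed by induction on $n$. The base case $n = 1$ is nothing but condition~(\ref{condition}) itself, since $\mu_1(A) = \int_A |w|^p\, d\mu$ and $f(A) \in \fM$ by the bimeasurability of $f$.

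For the inductive step, the key observation is the recursive factorization
\[
w^{(n+1)} = w \cdot (w^{(n)} \circ f),
\]
which follows by writing $w^{(n)} \circ f = (w \circ f^n)(w \circ f^{n-1}) \cdots (w \circ f)$ and multiplying by $w$. Consequently $|w^{(n+1)}|^p = |w|^p \cdot (|w^{(n)}|^p \circ f)$, so that
\[
\mu_{n+1}(A) = \int_A (|w^{(n)}|^p \circ f)\, |w|^p\, d\mu.
\]
The whole argument then reduces to a single sublemma: for every nonnegative measurable $h : X \to [0,\infty]$ and every $A \in \fM$,
\[
\int_A (h \circ f)\, |w|^p\, d\mu \leq c \int_{f(A)} h\, d\mu.
\]
I would verify this first for $h = \rchi_B$, in which case the left-hand side equals $\mu_1(A \cap f^{-1}(B))$; condition~(\ref{condition}) bounds this by $c\, \mu(f(A \cap f^{-1}(B)))$, and the elementary set inclusion $f(A \cap f^{-1}(B)) \subset f(A) \cap B$ (which is measurable by bimeasurability) converts the bound into $c\, \mu(f(A) \cap B) = c \int_{f(A)} \rchi_B\, d\mu$. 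The general case follows by linearity on nonnegative simple functions and the monotone convergence theorem.

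Applying the sublemma with $h = |w^{(n)}|^p$ yields $\mu_{n+1}(A) \leq c\, \mu_n(f(A))$, and the inductive hypothesis applied to the measurable set $f(A)$ gives $\mu_n(f(A)) \leq c^n \mu(f^n(f(A))) = c^n \mu(f^{n+1}(A))$, closing the induction. The only mildly delicate point is the sublemma's extension from indicators to arbitrary nonnegative measurables, but this is entirely routine, so no substantive obstacle is expected.
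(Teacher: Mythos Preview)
Your proof is correct, but it follows a different route from the paper's. The paper gives a two-line non-inductive argument: since $A \subset f^{-n}(f^n(A))$, one has $\rchi_A \leq \rchi_{f^n(A)} \circ f^n$, so
\[
\mu_n(A) \leq \int_X |\rchi_{f^n(A)} \circ f^n|^p\, |w^{(n)}|^p\, d\mu = \|(C_{w,f})^n(\rchi_{f^n(A)})\|_p^p \leq c^n \|\rchi_{f^n(A)}\|_p^p = c^n \mu(f^n(A)),
\]
using the already-stated operator-norm bound $\|C_{w,f}\| \leq c^{1/p}$. Your argument instead proceeds by induction via the recursive inequality $\mu_{n+1}(A) \leq c\,\mu_n(f(A))$, which you obtain from a pushforward-type sublemma derived directly from condition~(\ref{condition}) for indicators and extended by monotone convergence. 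The paper's approach is shorter and exploits the operator-theoretic framing; your approach is more measure-theoretic and self-contained (it never invokes the operator norm), and it isolates the one-step estimate $\mu_{n+1}(A) \leq c\,\mu_n(f(A))$, which is a slightly sharper structural statement than the lemma itself. Both are perfectly valid.
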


\begin{proof}
The case $n = 1$ is obvious. If $n \geq 2$ and $A \in \fM$, then
\begin{align*}
\mu_n(A) &\leq \int_X |\rchi_{f^n(A)} \circ f^n|^p |w \circ f^{n-1}|^p \cdots |w \circ f|^p |w|^p d\mu\\
&= \|(C_{w,f})^n(\rchi_{f^n(A)})\|_p^p \leq c^n \mu(f^n(A)),
\end{align*}
as it was to be shown.
\end{proof}

We now establish the main result of this section, a characterization of the weighted composition operators on $L^p(\mu)$ that are Li-Yorke chaotic. 

\begin{theorem}\label{MainLY}
The weighted composition operator $C_{w,f}$ on $L^p(\mu)$ is Li-Yorke chaotic if and only if there is a nonempty countable family 
$(B_i)_{i \in I}$ of measurable sets of finite positive $\mu$-measure such that the following conditions hold:
\begin{itemize}
\item [\rm (A)] There is an increasing sequence $(n_j)_{j \in \N}$ of positive integers such that
\[
\lim_{j \to \infty} \mu_{n_j}(f^{-n_j}(B_i)) = 0 \ \ \text{ for all } i \in I.
\]
\item [\rm (B)] $\displaystyle \sup\Bigg\{\frac{\mu_n(f^{-n}(B_i))}{\mu(B_i)} : i \in I, n \in \N\Bigg\} = \infty$.
\end{itemize}
\end{theorem}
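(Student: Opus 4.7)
The plan is to follow the template of Theorem~\ref{MainLYCX}, combining the Baire category / Banach--Steinhaus machinery of \cite{BerBonMulPer15} to produce an irregular vector for sufficiency, and to extract a suitable family $(B_i)$ from an irregular vector for necessity. The key identity I will use throughout is
\[
\|(C_{w,f})^n(\rchi_B)\|_p^p = \int_{f^{-n}(B)} |w^{(n)}|^p\,d\mu = \mu_n(f^{-n}(B)) \qquad (B \in \fM,\ n \in \N),
\]
which turns the quantities in (A) and (B) into the natural $L^p$-analogues of the supremum norms appearing in Theorem~\ref{MainLYCX}.

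For the sufficiency direction, I let $Y$ be the closed linear span in $L^p(\mu)$ of $\{\rchi_{B_i} : i \in I\}$. On the algebraic span, condition~(A) together with the displayed identity and the triangle inequality give $\|(C_{w,f})^{n_j}\varphi\|_p \to 0$, so the set of $\varphi \in Y$ with a subsequence of its orbit converging to $0$ is dense, and hence residual in $Y$ by \cite[Proposition~3]{BerBonMulPer15}. The rescaled characteristic functions $\rchi_{B_i}/\mu(B_i)^{1/p}$ are unit vectors in $Y$ whose $n$-th iterates have $p$-norm $(\mu_n(f^{-n}(B_i))/\mu(B_i))^{1/p}$, so condition~(B) forces $(\|(C_{w,f})^n|_Y\|)_n$ to be unbounded, and the Banach--Steinhaus theorem yields that the set of vectors in $Y$ with unbounded orbit is also residual. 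Their intersection contains an irregular vector for $C_{w,f}$, proving Li-Yorke chaos.

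For the necessity direction, I start with an irregular vector $\psi \in L^p(\mu)$ and decompose its essential support using the dyadic level sets $A_k := \{x \in X : 2^{-k} < |\psi(x)| \leq 2^{-k+1}\}$ for $k \in \Z$. By Markov's inequality each $A_k$ has finite $\mu$-measure, and taking $I := \{k \in \Z : \mu(A_k) > 0\}$ produces the required nonempty countable family $(A_k)_{k \in I}$. Condition~(A) follows from a subsequence $(n_j)$ with $\|(C_{w,f})^{n_j}\psi\|_p \to 0$ via the lower bound
\[
\|(C_{w,f})^{n_j}\psi\|_p^p \geq \int_{f^{-n_j}(A_k)} 2^{-kp} |w^{(n_j)}|^p\,d\mu = 2^{-kp} \mu_{n_j}(f^{-n_j}(A_k)).
\]
For condition~(B), suppose instead that the supremum equals some finite $\beta$. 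The pointwise bound $|\psi|^p \leq 2^p \sum_{k \in I} 2^{-kp} \rchi_{A_k}$, combined with the complementary estimate $2^{-kp}\mu(A_k) \leq \int_{A_k}|\psi|^p\,d\mu$, leads to $\|(C_{w,f})^n\psi\|_p^p \leq 2^p \beta \|\psi\|_p^p$ for every $n$, contradicting the unboundedness of the orbit of $\psi$.

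The main obstacle is the necessity of~(B): in the $C_0(X)$ setting $|\psi|$ is controlled uniformly by $\|\psi\|_\infty$ and a single family of super-level sets $\{|\psi| > 1/i\}$ is enough, but an $L^p$-function can be simultaneously unbounded and supported on a set of infinite measure, so those super-level sets no longer yield the required estimate. The dyadic refinement $A_k$ is chosen precisely so that $|\psi|$ is controlled both below and above on each piece (used for (A) and (B), respectively) while $\mu(A_k) < \infty$, and so that the resulting series telescopes against $\|\psi\|_p^p$.
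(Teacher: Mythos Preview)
Your proposal is correct and takes essentially the same approach as the paper: the same Baire/Banach--Steinhaus argument on the closed span of $\{\rchi_{B_i}\}$ for sufficiency, and the same dyadic level-set decomposition of an irregular vector $\psi$ for necessity. One small point worth making explicit: your bound $|\psi|^p \le 2^p\sum_{k\in I}2^{-kp}\rchi_{A_k}$ holds only $\mu$-a.e.\ (it fails on the $\mu$-null set $\bigcup_{k\notin I}A_k$), and propagating it through $(C_{w,f})^n$ requires that $f^{-n}$ of a $\mu$-null set is $\mu_n$-null --- this is precisely what the paper isolates as Lemma~\ref{LemmaC} and invokes at the corresponding step.
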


\begin{proof}
($\Leftarrow$): Let $Y$ be the closed linear span of $\{\rchi_{B_i} : i \in I\}$ in $L^p(\mu)$. By condition (A), for each $i \in I$,
\begin{align*}
\|(C_{w,f})^{n_j}(\rchi_{B_i})\|_p^p 
&= \int_X |\rchi_{B_i} \circ f^{n_j}|^p |w \circ f^{n_j-1}|^p \cdots |w \circ f|^p |w|^p d\mu\\
&= \int_{f^{-n_j}(B_i)} |w \circ f^{n_j-1}|^p \cdots |w \circ f|^p |w|^p d\mu\\
&= \mu_{n_j}\big(f^{-n_j}(B_i)\big) \to 0 \ \text{ as } j \to \infty.
\end{align*}
Hence, the set of all $\varphi \in Y$ whose orbit under $C_{w,f}$ has a subsequence converging to $0$ is dense in $Y$,
and so it is residual in $Y$ by \cite[Proposition~3]{BerBonMulPer15}. 
For each $i \in I$, let $\phi_i\!:= \mu(B_i)^{-\frac{1}{p}} \cdot \rchi_{B_i} \in Y$. Since
\[
\|\phi_i\|_p = 1 \ \ \text{ and } \ \ \|(C_{w,f})^n(\phi_i)\|_p^p = \frac{\mu_n\big(f^{-n}(B_i)\big)}{\mu(B_i)} \ \ \ \ (n \in \N),
\]
it follows from condition (B) that the sequence $(\|(C_{w,f})^n|_Y\|)_{n \in \N}$ is unbounded.
Hence, by the Banach-Steinhaus theorem \cite[Theorem~2.5]{WRud91}, 
the set of all $\varphi \in Y$ whose orbit under $C_{w,f}$ is unbounded is also residual in $Y$. 
Thus, $C_{w,f}$ admits an irregular vector, and so it is Li-Yorke chaotic.

\smallskip\noindent
($\Rightarrow$): Since the operator $C_{w,f}$ is Li-Yorke chaotic, it admits an irregular vector $\psi \in L^p(\mu)$. For each $i \in \Z$, let
\[
B_i\!:= \{x \in X : 2^{i-1} \leq |\psi(x)| < 2^i\} \in \fM.
\]
Consider the countable set $I\!:= \{i \in \Z : \mu(B_i) > 0\}$, which is nonempty because $\psi$ is not the zero vector in $L^p(\mu)$. 
Since 
\[
\sum_{i \in \Z} 2^{(i-1)p} \mu(B_i) \leq \int_X |\psi|^p d\mu < \infty,
\]
we have that $\mu(B_i) \in (0,\infty)$ for all  $i \in I$.
Since $\psi$ is an irregular vector for $C_{w,f}$, there is an increasing sequence $(n_j)_{j \in \N}$ of positive integers such that
$\|(C_{w,f})^{n_j}(\psi)\|_p \to 0$ as $j \to \infty$. Since, for each $i \in I$,
\begin{align*}
\|(C_{w,f})^{n_j}(\psi)\|_p^p &= \int_X |\psi \circ f^{n_j}|^p |w \circ f^{n_j-1}|^p \cdots |w \circ f|^p |w|^p d\mu\\
&\geq \int_{f^{-n_j}(B_i)} |\psi \circ f^{n_j}|^p |w \circ f^{n_j-1}|^p \cdots |w \circ f|^p |w|^p d\mu\\
&\geq 2^{(i-1)p} \mu_{n_j}\big(f^{-n_j}(B_i)\big),
\end{align*}
we obtain condition (A). If condition (B) is false, then there exists $\beta \in \R$ such that
\begin{equation}\label{EqLY1}
\mu_n\big(f^{-n}(B_i)\big) \leq \beta \mu(B_i) \ \text{ for all }  i \in I \text{ and } n \in \N.
\end{equation}
In view of Lemma~\ref{LemmaC}, we have that (\ref{EqLY1}) holds for all $i \in \Z$ and $n \in \N$. Therefore,
\begin{align*}
\|(C_{w,f})^n(\psi)\|_p^p 
&= \sum_{i \in \Z} \int_{f^{-n}(B_i)} |\psi \circ f^n|^p |w \circ f^{n-1}|^p \cdots |w \circ f|^p |w|^p d\mu\\
&\leq \sum_{i \in \Z} 2^{ip} \int_{f^{-n}(B_i)} |w \circ f^{n-1}|^p \cdots |w \circ f|^p |w|^p d\mu\\
&= \sum_{i \in \Z} 2^{ip} \mu_n\big(f^{-n}(B_i)\big)
\leq 2^p \beta \sum_{i \in \Z} 2^{(i-1)p} \mu(B_i)
\leq 2^p \beta \|\psi\|_p^p,
\end{align*}
which contradicts the fact that $\psi$ is an irregular vector for $C_{w,f}$.
\end{proof}

\begin{remark}\label{RemarkLY1}
Note that condition (A) in Theorem~\ref{MainLY} can be replaced by the following weaker condition:
\begin{itemize}
\item [\rm (A')] $\displaystyle \liminf_{n \to \infty} \mu_n(f^{-n}(B_i)) = 0$ for all $i \in I$.
\end{itemize}
In fact, suppose that there is a nonempty countable family $(B_i)_{i \in I}$ of measurable sets of finite positive $\mu$-measure satisfying conditions (A') and (B). We have to prove that $C_{w,f}$ is Li-Yorke chaotic. If
\[
\limsup_{n \to \infty} \mu_n(f^{-n}(B_k)) > 0 \ \ \text{ for some } k \in I,
\]
then $\rchi_{B_k}$ is a semi-irregular vector for $C_{w,f}$, and so $C_{w,f}$ is Li-Yorke chaotic. Assume
\[
\lim_{n \to \infty} \mu_n(f^{-n}(B_i)) = 0 \ \ \text{ for all } i \in I.
\]
This means that condition (A) holds with $(n_j)_{j \in \N}\!:= (n)_{n \in \N}$. 
Since we are assuming that condition (B) holds, $C_{w,f}$ is Li-Yorke chaotic by Theorem~\ref{MainLY}.
\end{remark}

As an immediate consequence of Theorem~\ref{MainLY}, we obtain the following result, which is exactly Theorem~1.1 in \cite{BerDarPir20}.

\begin{corollary}\label{CorLY1}
The composition operator $C_f$ on $L^p(\mu)$ is Li-Yorke chaotic if and only if there is a nonempty countable family $(B_i)_{i \in I}$ of measurable sets of finite positive $\mu$-measure such that the following conditions hold:
\begin{itemize}
\item There is an increasing sequence $(n_j)_{j \in \N}$ of positive integers such that
\[
\lim_{j \to \infty} \mu(f^{-n_j}(B_i)) = 0 \ \ \text{ for all } i \in I.
\]
\item $\displaystyle \sup\Bigg\{\frac{\mu(f^{-n}(B_i))}{\mu(B_i)} : i \in I, n \in \N\Bigg\} = \infty$.
\end{itemize}
\end{corollary}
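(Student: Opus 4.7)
The plan is to deduce this corollary directly from Theorem~\ref{MainLY} by specializing to the constant weight $w \equiv 1$. The first step is to recall the observation made just before Lemma~\ref{LemmaC}: when $w$ is the constant function $1$, the measures $\mu_n$ associated to $\mu, w, f$ reduce to $\mu_n = \mu$ for every $n \geq 1$. Under this identification, condition (A) of Theorem~\ref{MainLY} collapses to
\[
\lim_{j \to \infty} \mu(f^{-n_j}(B_i)) = 0 \ \text{ for all } i \in I,
\]
and condition (B) collapses to
\[
\sup\Bigg\{\frac{\mu(f^{-n}(B_i))}{\mu(B_i)} : i \in I, n \in \N\Bigg\} = \infty,
\]
which are precisely the two conditions appearing in the statement of the corollary.

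The second step is simply to invoke Theorem~\ref{MainLY} applied to $w \equiv 1$. The only point that ought to be flagged is that one needs $C_f$ to be a well-defined bounded linear operator on $L^p(\mu)$ in order to apply the theorem; but this is part of the standing assumption of Section~\ref{Section3}, since $C_f$ is the case $w \equiv 1$ of $C_{w,f}$ and condition (\ref{condition}) is assumed to hold throughout. There is no serious obstacle here: the corollary is a pure specialization of the main theorem, which is exactly why the paper introduces it with the phrase \emph{an immediate consequence}. The entire content of the proof is the identification $\mu_n = \mu$ together with a single appeal to Theorem~\ref{MainLY}.
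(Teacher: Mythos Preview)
Your proposal is correct and is exactly the paper's approach: the paper states the corollary as ``an immediate consequence of Theorem~\ref{MainLY}'' and gives no further proof, relying on the observation that $\mu_n = \mu$ when $w \equiv 1$. Your write-up simply makes explicit what the paper leaves implicit.
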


\begin{remark}\label{RemarkLY2}
In view of Remark~\ref{RemarkLY1}, we have that the first condition in the above corollary can be replaced by the following weaker condition:
\begin{itemize}
\item $\displaystyle \liminf_{n \to \infty} \mu(f^{-n}(B_i)) = 0$ for all $i \in I$.
\end{itemize}
\end{remark}

\begin{corollary}\label{CorLY2}
A unilateral weighted backward shift
\[
B_w : (x_1,x_2,x_3,\ldots) \in \ell^p(\N) \mapsto (w_1x_2,w_2x_3,\ldots) \in \ell^p(\N),
\]
where $w\!:= (w_n)_{n \in \N}$ is a bounded sequence of scalars, is Li-Yorke chaotic if and only if
\begin{equation}\label{EqLY4}
\sup\{|w_i \cdots w_j| : i,j \in \N, i \leq j\} = \infty.
\end{equation}
\end{corollary}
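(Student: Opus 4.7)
The plan is to realize $B_w$ as a weighted composition operator on $L^p(\mu)$ for a suitable choice of $(X,\fM,\mu)$ and $f$, and then invoke Theorem~\ref{MainLY}, following the pattern of Corollary~\ref{CorLYCX1}. Concretely, I take $X := \N$ equipped with its discrete $\sigma$-algebra and the counting measure $\mu$, and let $f : n \in \N \mapsto n+1 \in \N$. Then $L^p(\mu) = \ell^p(\N)$, the map $f$ is bimeasurable, the boundedness of $w$ gives condition~(\ref{condition}) (with $c = \|w\|_\infty^p$), and one checks directly that $C_{w,f} = B_w$.

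The crucial simplification in this discrete setting is that for any finite set $B \subset \N$, one has $f^{-n}(B) = \{k-n : k \in B,\ k > n\}$, which is empty for all sufficiently large $n$. Consequently $\mu_n(f^{-n}(B_i)) = 0$ eventually for any countable family $(B_i)$ of finite subsets of $\N$, so condition~(A) of Theorem~\ref{MainLY} is automatic (take $n_j := j$). Only condition~(B) needs to be analyzed.

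For sufficiency, I take $B_i := \{i\}$ for $i \in \N$, so $\mu(B_i) = 1$. A straightforward computation from the definition of $\mu_n$ shows that for $i > n$,
\[
\mu_n(f^{-n}(\{i\})) = \mu_n(\{i-n\}) = |w_{i-n}|^p |w_{i-n+1}|^p \cdots |w_{i-1}|^p.
\]
Taking the supremum over $i > n$, $n \in \N$, yields $\sup\{|w_k \cdots w_j|^p : 1 \leq k \leq j\}$, which is infinite by hypothesis~(\ref{EqLY4}); hence condition~(B) holds and Theorem~\ref{MainLY} gives that $B_w$ is Li-Yorke chaotic. Conversely, if (\ref{EqLY4}) fails, set $\beta := \sup\{|w_k \cdots w_j| : k \leq j\} < \infty$. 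For any measurable set $A$ of finite positive measure (necessarily a finite subset of $\N$),
\[
\mu_n(f^{-n}(A)) = \sum_{k \in A,\, k > n} |w_{k-n} \cdots w_{k-1}|^p \leq \beta^p \mu(A),
\]
so condition~(B) fails for every candidate family $(B_i)_{i \in I}$, whence $B_w$ is not Li-Yorke chaotic.

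I do not anticipate any substantive obstacle: the argument is a short reduction to Theorem~\ref{MainLY} that mirrors the $c_0$ case treated in Corollary~\ref{CorLYCX1}. The only routine care needed is verifying that the counting-measure evaluation of $\mu_n$ on a singleton produces exactly the partial product of the weight moduli raised to the $p$-th power, and that restricting attention to singletons $B_i = \{i\}$ suffices to detect the blow-up in condition~(B).
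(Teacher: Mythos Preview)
Your proposal is correct and follows essentially the same approach as the paper: realize $B_w$ as $C_{w,f}$ on $L^p$ of the counting measure with $f(n)=n+1$, observe that condition~(A) of Theorem~\ref{MainLY} is vacuous since $f^{-n}(B)$ is eventually empty for finite $B$, and then verify that condition~(B) with the singletons $B_i=\{i\}$ is equivalent to~(\ref{EqLY4}). The computations and the choice of family $(B_i)$ match the paper's proof almost verbatim.
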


\begin{proof}
Consider $X\!:= \N$, $\fM\!:= \cP(X)$ (the power set of $X$), $\mu$ the counting measure on $\fM$, and $f : n \in \N \mapsto n+1 \in \N$.
Then $L^p(\mu) = \ell^p(\N)$ and $C_{w,f} = B_w$. In the present case, condition (A) is superfluous.
If (\ref{EqLY4}) fails, that is, $\beta\!:= \sup\{|w_i \cdots w_j| : i,j \in \N, i \leq j\} < \infty$, then
\[
\mu_n(A) = \sum_{i \in A} |w_i \cdots w_{i+n-1}|^p \leq \beta^p \mu(A) \ \ \text{ for all } A \in \fM,
\]
and so condition (B) fails no matter how we choose the family $(B_i)_{i \in I}$.
On the other hand, if (\ref{EqLY4}) is true, then condition (B) holds with $B_i\!:= \{i\}$ for each $i \in \N$.
\end{proof}

The above corollary was established in \cite[Proposition~27]{BerBonMarPer11} in the case $w$ is a bounded sequence of nonzero scalars.

\begin{corollary}\label{CorLY3}
A bilateral weighted backward shift
\[
B_w : (x_n)_{n \in \Z} \in \ell^p(\Z) \mapsto (w_n x_{n+1})_{n \in \Z} \in \ell^p(\Z),
\]
where $w\!:= (w_n)_{n \in \Z}$ is a bounded sequence of nonzero scalars, is Li-Yorke chaotic if and only if the following conditions hold:
\begin{itemize}
\item [\rm (a)] $\displaystyle \liminf_{n \to \infty} |w_{-n} \cdots w_{-1}| = 0$.
\item [\rm (b)] $\sup\{|w_i \cdots w_j| : i,j \in \Z, i \leq j\} = \infty$.
\end{itemize}
\end{corollary}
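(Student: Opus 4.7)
The plan is to deduce the result from Theorem~\ref{MainLY} applied to $X := \Z$ equipped with the counting measure $\mu$ on the power-set $\sigma$-algebra and with $f : n \mapsto n+1$; then $L^p(\mu) = \ell^p(\Z)$ and $C_{w,f} = B_w$. Under this identification, $f^{-n}(\{k\}) = \{k-n\}$, so
\[
\mu_n(f^{-n}(\{k\})) = |w_{k-n}\cdots w_{k-1}|^p,
\]
and a direct computation gives $\|B_w^n \rchi_{\{0\}}\|_p = |w_{-n}\cdots w_{-1}|$. These formulas will translate the conditions (A) and (B) of Theorem~\ref{MainLY} into arithmetic conditions on products of weights, paralleling the argument in Corollary~\ref{CorLYCX2}.

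For sufficiency, assume (a) and (b). First, if $\limsup_{n \to \infty} |w_{-n}\cdots w_{-1}| > 0$, then (a) together with the norm formula above show that $\rchi_{\{0\}}$ is a semi-irregular vector for $B_w$, so $B_w$ is Li-Yorke chaotic. Otherwise $\lim_{n \to \infty} |w_{-n}\cdots w_{-1}| = 0$, and I would apply Theorem~\ref{MainLY} with $(n_j)_{j \in \N} := (n)_{n \in \N}$ and $(B_i)_{i \in I}$ an enumeration of the singletons $\{k\}$, $k \in \Z$. Condition (B) is immediate from hypothesis (b), while condition (A) reduces to proving $\lim_{n \to \infty} |w_{k-n}\cdots w_{k-1}| = 0$ for every $k \in \Z$, which follows from the case $k=0$ by an index-shift argument that uses both the nonzeroness and the boundedness of $w$.

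For necessity, let $(B_i)_{i \in I}$ and $(n_j)_{j \in \N}$ be as in Theorem~\ref{MainLY}. Picking some $k \in B_i$, condition (A) yields
\[
|w_{k-n_j}\cdots w_{k-1}|^p \leq \mu_{n_j}(f^{-n_j}(B_i)) \to 0,
\]
and the same index-shift argument produces a subsequence of $|w_{-m}\cdots w_{-1}|$ tending to $0$, giving (a). For (b), if the supremum were some finite $\beta$, then for every $i \in I$ and $n \in \N$,
\[
\frac{\mu_n(f^{-n}(B_i))}{\mu(B_i)} = \frac{1}{\card(B_i)} \sum_{k \in B_i} |w_{k-n}\cdots w_{k-1}|^p \leq \beta^p,
\]
contradicting condition (B) of Theorem~\ref{MainLY}.

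The main technical point is the index-shift lemma: if $\lim_{n \to \infty} |w_{-n}\cdots w_{-1}| = 0$, then $\lim_{n \to \infty} |w_{k-n}\cdots w_{k-1}| = 0$ for every $k \in \Z$, and conversely convergence along such a shifted product forces convergence of a subsequence of $|w_{-m}\cdots w_{-1}|$ to $0$. The nonzeroness hypothesis is essential here, since it allows the $|k|$ mismatched factors to be moved to the denominator, while boundedness of $w$ ensures those extra factors are controlled by a fixed constant. As the example in Remark~\ref{RemarkLYCX} suggests, without the nonzeroness hypothesis the statement would need to be reformulated in terms of tail products avoiding the positions where $w$ vanishes.
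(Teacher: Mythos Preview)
Your proposal is correct and follows essentially the same route as the paper's proof: the same measure-theoretic setup, the same case split on $\limsup |w_{-n}\cdots w_{-1}|$ for sufficiency, and the same singleton-extraction and $\beta^p$-bound arguments for necessity. The only difference is that you make explicit the index-shift lemma translating between $|w_{-n}\cdots w_{-1}|$ and $|w_{k-n}\cdots w_{k-1}|$, which the paper invokes tacitly (and note that boundedness of $w$ is not actually needed there, since the finitely many mismatched factors form a fixed finite nonzero constant).
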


\begin{proof}
Consider $X\!:= \Z$, $\fM\!:= \cP(X)$, $\mu$ the counting measure on $\fM$, and $f : n \in \Z \mapsto n+1 \in \Z$.
Then $L^p(\mu) = \ell^p(\Z)$ and $C_{w,f} = B_w$. Suppose that (a) and (b) hold. If
\[
\limsup_{n \to \infty} |w_{-n} \cdots w_{-1}| > 0,
\]
then $\rchi_{\{0\}}$ is a semi-irregular vector for $C_{w,f}$, and so $C_{w,f}$ is Li-Yorke chaotic. If
\[
\lim_{n \to \infty} |w_{-n} \cdots w_{-1}| = 0,
\]
then conditions (A) and (B) hold with $(n_j)_{j \in \N}\!:= (n)_{n \in \N}$ and $B_i\!:= \{i\}$ for all $i \in \Z$, and so $C_{w,f}$ is Li-Yorke chaotic by Theorem~\ref{MainLY}.
Conversely, suppose that $C_{w,f}$ is Li-Yorke chaotic and let $(B_i)_{i \in I}$ and $(n_j)_{j \in \N}$ be as in the statement of Theorem~\ref{MainLY}.
By choosing $i \in I$ and $k \in B_i$, we have that
\[
|w_{-n_j + k} \cdots w_{-1 + k}|^p = \mu_{n_j}(f^{-n_j}(\{k\})) \leq \mu_{n_j}(f^{-n_j}(B_i)) \to 0 \text{ as } j \to \infty,
\]
which gives (a). If (b) is false, that is, $\beta\!:= \sup\{|w_i \cdots w_j| : i,j \in \Z, i \leq j\} < \infty$, then
\[
\mu_n(f^{-n}(B_i)) = \sum_{k \in B_i} |w_{-n+k} \cdots w_{-1+k}|^p \leq \beta^p \mu(B_i) \ \ \text{ for all } i \in I,
\]
contradicting condition (B).
\end{proof}

The above corollary was established in \cite[Corollary~1.6]{BerDarPir20} in the case $w$ is a bounded sequence of positive scalars.

\begin{remark}\label{RemarkLY3}
We can remove the hypothesis that the weights $w_n$ are {\em nonzero} in Corollary~\ref{CorLY3} provided we state the result as follows: 
{\it Consider a bilateral weighted backward shift
\[
B_w : (x_n)_{n \in \Z} \in \ell^p(\Z) \mapsto (w_n x_{n+1})_{n \in \Z} \in \ell^p(\Z),
\]
where $w\!:= (w_n)_{n \in \Z}$ is a bounded sequence of scalars. Let
\[
\beta^-\!:= \sup\{|w_i \cdots w_j| : i \leq j \leq -1\} \ \ \text{ and } \ \ \beta^+\!:= \sup\{|w_i \cdots w_j| : 0 \leq i \leq j\}.
\]
Then $B_w$ is Li-Yorke chaotic if and only if one of the following conditions hold:
\begin{itemize}
\item [\rm (I)]  $\beta^+ = \infty$ and $\,\displaystyle \liminf_{n \to \infty} |w_{-n} \cdots w_{-k}| = 0$ for some $k \in \Z$.
\item [\rm (II)] $\beta^+ < \infty$, $\beta^- = \infty$ and $\,\displaystyle \liminf_{n \to \infty} |w_{-n} \cdots w_{-k}| = 0$ for all $k \in \N$.
\end{itemize}}

Indeed, let $X$, $\fM$, $\mu$ and $f$ be as in the proof of Corollary~\ref{CorLY3}.  
Assume $C_{w,f}$ Li-Yorke chaotic and let $(B_i)_{i \in I}$ and $(n_j)_{j \in \N}$ be as in Theorem~\ref{MainLY}.
Define
\[
\beta\!:= \sup\{|w_i \cdots w_j| : i,j \in \Z, i \leq j\} \ \ \text{ and } \ \ B\!:= \bigcup_{i \in I} B_i.
\]
If $\ell \in B_i$ for some $i \in I$, then
\[
|w_{-n_j + \ell} \cdots w_{-1 + \ell}|^p = \mu_{n_j}(f^{-n_j}(\{\ell\})) \leq \mu_{n_j}(f^{-n_j}(B_i)) \to 0 \ \text{ as } j \to \infty.
\]
Therefore,
\begin{equation}\label{NotesLY1}
\liminf_{n \to \infty} |w_{-n} \cdots w_{-1 + \ell}| = 0 \ \ \text{ for all } \ell \in B.
\end{equation}
Since 
\[
\mu_n(f^{-n}(B_i)) = \sum_{k \in B_i} |w_{-n+k} \cdots w_{-1+k}|^p \leq \beta^p \mu(B_i) \ \ \ \ (i \in I, n \in \N),
\]
condition (B) gives $\beta = \infty$. 
If $\beta^+ = \infty$, then (\ref{NotesLY1}) implies that (I) holds. Assume $\beta^+ < \infty$. 
Since $\beta \leq \max\{\beta^-, \beta^+, \beta^- \!\!\cdot\!\beta^+\}$, we must have $\beta^- = \infty$. 
If $B$ is unbounded below, (\ref{NotesLY1}) implies that (II) holds.
Assume $B$ bounded below and let $m\!:= \min B$. We claim that
\begin{equation}\label{NotesLY3}
w_n \neq 0 \ \ \text{ for all } n < m.
\end{equation}
Indeed, suppose that $w_t = 0$ for a certain $t < m$. Let
\[
C\!:= \sup\{|w_i \cdots w_j| : t < i \leq j\}.
\]
Since $\beta^+ < \infty$, we have that $C < \infty$. Since $t < m = \min B$ and $w_t = 0$,
\[
\mu_n(f^{-n}(B_i)) = \sum_{k \in B_i} |w_{-n+k} \cdots w_{-1+k}|^p \leq C^p \mu(B_i) \ \ \ \ (i \in I, n \in \N).
\]
This contradiction proves our claim. By (\ref{NotesLY1}),
\[
\liminf_{n \to \infty} |w_{-n} \cdots w_{-1 + m}| = 0.
\]
Hence, (\ref{NotesLY3}) implies that (II) holds.

\smallskip
Let us now prove the converse. If (I) holds, let $I\!:= \{i \in \Z : i > -k\}$ and $B_i\!:= \{i\}$ for each $i \in I$.
Since $\liminf_{n \to \infty} |w_{-n} \cdots w_{-k}| = 0$, we have that
\[
\liminf_{n \to \infty} \mu_n(f^{-n}(B_i)) = \liminf_{n \to \infty} |w_{-n+i} \cdots w_{-1+i}|^p = 0 \ \text{ for all } i \in I.
\]
Moreover, condition (B) holds because $\beta^+ = \infty$. 
Thus, in view of Remark~\ref{RemarkLY1}, we have that $C_{w,f}$ is Li-Yorke chaotic. 

Finally, if (II) holds, consider $I\!:= \{i \in \Z : i \leq -1\}$ and $B_i\!:= \{i\}$ for each $i \in I$. 
Since $\liminf_{n \to \infty} |w_{-n} \cdots w_{-k}| = 0$ for all $k \in \N$, we have that
\[
\liminf_{n \to \infty} \mu_n(f^{-n}(B_i)) = \liminf_{n \to \infty} |w_{-n+i} \cdots w_{-1+i}|^p = 0 \ \text{ for all } i \in I.
\]
Since $\beta^- = \infty$, condition (B) holds. Thus, $C_{w,f}$ is Li-Yorke chaotic by Remark~\ref{RemarkLY1}. 
\end{remark}


\section{Weighted shifts on Fr\'echet sequence spaces}\label{Section4}

In this section we complement the previous results on Li-Yorke chaos for weighted shifts on the classical Banach sequence spaces 
$c_0$ and $\ell^p$ by considering weighted shifts on the more general setting of abstract Fr\'echet sequence spaces.
Recall that a {\em Fr\'echet sequence space} is a Fr\'echet space $X$ which is a vector subspace of the product space $\K^\N$
such that the inclusion map $X \to \K^\N$ is continuous, i.e., convergence in $X$ implies coordinatewise convergence.
If $w\!:= (w_n)_{n \in \N}$ is a sequence of nonzero scalars, the closed graph theorem implies that the 
{\em unilateral weighted backward shift}
\[
B_w(x_1,x_2,x_3,\ldots)\!:= (w_1x_2,w_2x_3,w_3x_4,\ldots)
\]
is a continuous linear operator on $X$ as soon as it maps $X$ into itself.
The canonical vectors $e_n\!:= (\delta_{n,j})_{j \in \N} \in \K^\N$ ($n \in \N$) form a {\em basis} of $X$ if they belong to $X$ and
\[
x = \sum_{n=1}^\infty x_n e_n \ \ \text{ for all } x\!:= (x_n)_{n \in \N} \in X.
\]
Recall also that a {\em Fr\'echet sequence space over $\Z$} is a Fr\'echet space $X$ which is a vector subspace of the product space 
$\K^\Z$ such that the inclusion map $X \to \K^\Z$ is continuous. As before, if $w\!:= (w_n)_{n \in \Z}$ is a sequence of nonzero scalars,
then the {\em bilateral weighted backward shift}
\[
B_w((x_n)_{n \in \Z})\!:= (w_n x_{n+1})_{n \in \Z}
\]
is a continuous linear operator on $X$ as soon as it maps $X$ into itself.
The canonical vectors $e_n\!:= (\delta_{n,j})_{j \in \Z} \in \K^\Z$ ($n \in \Z$) form a {\em basis} of $X$ if they belong to $X$ and
\[
x = \sum_{n=-\infty}^\infty x_n e_n \ \ \text{ for all } x\!:= (x_n)_{n \in \Z} \in X.
\]

Finally, recall that a continuous map $f : M \to M$, where $M$ is a metric space, is said to be {\em densely} (resp.\ {\em generically}) 
{\em Li-Yorke chaotic} if it admits a dense (resp.\ residual) scrambled set.

\smallskip
The following trichotomy complements the study of Li-Yorke chaos for linear operators developed in \cite{BerBonMulPer15}.

\begin{theorem}\label{Trichotomy}
Let $T$ be a continuous linear operator on a Fr\'echet space $X$. If
\[
\NS(T)\!:= \{x \in X : (T^n x)_{n \in \N} \text{ has a subsequence converging to zero}\},
\]
then either
\begin{itemize}
\item [\rm (a)] $T$ has a residual set of irregular vectors, or
\item [\rm (b)] $\displaystyle \lim_{n \to \infty} T^n x = 0$ for every $x \in X$, or
\item [\rm (c)] $\NS(T)$ is not dense in $X$.
\end{itemize}
Moreover:
\begin{itemize}
\item [\rm (A)] If $\NS(T)$ is not closed in $X$, then $T$ is Li-Yorke chaotic.
\item [\rm (B)] If $X$ is separable and $\NS(T)$ is dense and not closed in $X$, then $T$ is densely Li-Yorke chaotic.
\end{itemize}
\end{theorem}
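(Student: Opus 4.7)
The plan rests on one observation plus routine structural facts. Observation: \emph{any dense $G_\delta$ linear subspace $V$ of a Fréchet space $Y$ equals $Y$}, since otherwise, for $x \in Y \setminus V$ the translate $x+V$ would be a second dense $G_\delta$ disjoint from $V$, impossible by Baire. Routine facts: $\NS(T)$ is a vector subspace of $X$ (a common-subsequence argument gives closure under sums) and is a $G_\delta$ set (diagonal over a countable defining family of seminorms); $\NS_0(T):=\{x\in X:T^n x\to 0\}$ is likewise a vector subspace.

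\emph{Trichotomy.} Assume $\NS(T)$ is dense (else (c) holds); being a dense $G_\delta$ it is residual. Split on equicontinuity of $\{T^n\}_{n\in\N}$. If $\{T^n\}$ is \emph{not} equicontinuous, the Banach--Steinhaus theorem \cite[Theorem~2.5]{WRud91} provides a residual set of vectors with unbounded orbit; its intersection with the residual set $\NS(T)$ is a residual set of irregular vectors, giving (a). If $\{T^n\}$ \emph{is} equicontinuous, the key subsequence-to-sequence upgrade shows $\NS(T)=\NS_0(T)$: given $T^{n_k}x\to 0$ and any continuous seminorm $p$, choose a controlling seminorm $q$ with $p(T^m y)\leq q(y)$ for all $m\geq 0$ and $y\in X$; then $p(T^{n_k+m}x)\leq q(T^{n_k}x)\to 0$ forces $p(T^n x)\to 0$. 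Hence $\NS_0(T)=\NS(T)$ is a dense $G_\delta$ subspace, and by the observation $\NS_0(T)=X$, i.e., (b).

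\emph{Proof of (A).} Assume $\NS(T)$ is not closed; by the equivalence cited in the introduction it suffices to exhibit a semi-irregular vector, i.e., an element of $\NS(T)\setminus\NS_0(T)$. Suppose for contradiction that $\NS(T)=\NS_0(T)$. Then $\NS(T)$ is a $G_\delta$ subspace, and viewed inside the closed Fréchet subspace $\ov{\NS(T)}$ it is dense and $G_\delta$; the observation, applied inside $\ov{\NS(T)}$, yields $\NS(T)=\ov{\NS(T)}$, contradicting the hypothesis.

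\emph{Proof of (B).} Add the hypotheses that $X$ is separable and $\NS(T)$ is dense. The trichotomy gives (a) or (b); alternative (b) would force $\NS_0(T)=X$, hence $\NS(T)=X$ would be closed, so (a) must hold and $T$ has a residual set of irregular vectors. Since every irregular vector is semi-irregular, $\NS(T)\setminus\NS_0(T)$ contains a residual set and is therefore itself residual in $X$. The difference map $\sigma:(x,y)\in X\times X\mapsto x-y\in X$ is a continuous linear surjection, hence open by the open mapping theorem, so $\sigma^{-1}(\NS(T)\setminus\NS_0(T))$ is residual in $X\times X$; by linearity of $T$ this preimage is precisely the set of Li-Yorke pairs of $T$. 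Since $X$ is a perfect Polish space, the Kuratowski--Mycielski theorem yields a dense Cantor set $K\subset X$ such that every pair of distinct points of $K$ is a Li-Yorke pair, providing the required dense (uncountable) scrambled set. The crux of the whole argument is the subsequence-to-sequence upgrade in the equicontinuous branch of the trichotomy; once that is in hand, the dense-$G_\delta$-subspace observation together with Baire--Mycielski machinery takes care of the rest.
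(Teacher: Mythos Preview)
Your proof is correct, with one caveat: the ``routine fact'' that $\NS(T)$ is a vector subspace is false in general---if $T^{n_k}x\to 0$ and $T^{m_j}y\to 0$ along index sets with no useful overlap, no common-subsequence argument produces a subsequence along which $T^n(x+y)\to 0$. Fortunately you never actually use this: every time the subspace property is invoked it is for $\NS_0(T)$ (or under the standing hypothesis $\NS(T)=\NS_0(T)$), where it holds. Once that sentence is removed, the argument stands.

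The route is genuinely different from the paper's. For the trichotomy the paper splits on whether $T$ has an irregular vector and then imports machinery from \cite{BerBonMulPer15}: the Dense Li-Yorke Chaos Criterion plus Theorem~17 to obtain alternative~(a), and Theorem~8 (no irregular $\Rightarrow$ no semi-irregular) to force $\NS(T)=\NS_0(T)$ in alternative~(b). You split instead on equicontinuity of $\{T^n\}$: Banach--Steinhaus handles the non-equicontinuous branch directly, and the equicontinuous branch gives the upgrade $\NS(T)=\NS_0(T)$ by a two-line estimate, bypassing Theorem~8. The Baire argument that a dense $G_\delta$ subspace exhausts the space is the same in both proofs. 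For~(B) the paper again cites \cite{BerBonMulPer15} (Theorem~10) to convert a residual set of irregular vectors into a dense scrambled set, whereas you unpack this step via the open mapping theorem and Kuratowski--Mycielski. The upshot is that your proof is essentially self-contained modulo standard Baire-category tools, at the cost of invoking Kuratowski--Mycielski explicitly; the paper's version is terser but leans more heavily on the prior development in \cite{BerBonMulPer15}.
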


\begin{proof}
It is clear that the three properties (a), (b) and (c) are mutually exclusive. 

We assume that $\NS(T)$ is dense in $X$ and prove that either (a) or (b) must be true.
If $T$ has an irregular vector, then the Dense Li-Yorke Chaos Criterion \cite[Definition~16]{BerBonMulPer15} holds.
Hence, by \cite[Theorem~17]{BerBonMulPer15} (see also the last paragraph of \cite[Section~6]{BerBonMulPer15}), (a) is true.
If $T$ does not have an irregular vector, then \cite[Theorem~8]{BerBonMulPer15} guarantees that $T$ does not have a semi-irregular 
vector either. Thus,
\begin{equation}\label{cz}
\NS(T) = \Big\{x \in X : \lim_{n \to \infty} T^n x = 0\Big\},
\end{equation}
which is a vector subspace of $X$. Since $\NS(T)$ is residual in $X$, we must have $\NS(T) = X$. 
Indeed, given $x \in X$, since $\NS(T)$ and $x + \NS(T)$ are both residual in $X$, the intersection $\NS(T) \cap (x + \NS(T))$
is nonempty, and so $x \in \NS(T) + \NS(T) = \NS(T)$. Hence, (b) is true.

In order to prove (A), we assume that $T$ is not Li-Yorke chaotic. 
Let $M$ be the set in the right-hand side of (\ref{cz}) and let $Y\!:= \ov{M}$.
Consider the operator $S : Y \to Y$ obtained by restricting $T$ to $Y$. 
Since $T$ is not Li-Yorke chaotic, $T$ has no semi-irregular vector, and so $\NS(T) = M$. 
Moreover, both (a) and (c) are false for the operator $S$. 
Thus, (b) must hold for the operator $S$, which means that $M = Y = \ov{M}$.

Finally, assume the hypotheses of (B). 
Since $\NS(T)$ is dense in $X$, (c) does not hold.
Since $\NS(T)$ is not closed in $X$, (A) ensures that $T$ is Li-Yorke chaotic, and so (b) is also false. 
By the above trichotomy, (a) must be true.
Since we are assuming that $X$ is separable, $T$ is densely Li-Yorke chaotic by \cite[Theorem~10]{BerBonMulPer15}.
\end{proof}

\begin{remark}
The converses of (A) and (B) are false in general. For instance, for every generically Li-Yorke chaotic operator $T$, 
we have that $\NS(T) = X$ by \cite[Theorem~34]{BerBonMulPer15}.
\end{remark}

\begin{corollary}\label{UWBS}
Suppose that $X$ is a Fr\'echet sequence space in which the sequence $(e_n)_{n \in \N}$ of canonical vectors is a basis,
$w\!:= (w_n)_{n \in \N}$ is a sequence of nonzero scalars, and the unilateral weighted backward shift $B_w$ is a well-defined 
operator on $X$. Then either
\begin{itemize}
\item [\rm (a)] $B_w$ is densely Li-Yorke chaotic, or
\item [\rm (b)] $\displaystyle \lim_{n \to \infty} (B_w)^n x = 0$ for every $x \in X$.
\end{itemize}
\end{corollary}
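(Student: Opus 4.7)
The plan is to apply the trichotomy of Theorem~\ref{Trichotomy}. The key observation will be that the basis hypothesis forces the linear subspace
\[
M := \bigl\{x \in X : \lim_{n \to \infty} (B_w)^n x = 0\bigr\}
\]
to be dense in $X$. From $B_w e_1 = 0$ and $B_w e_k = w_{k-1} e_{k-1}$ for $k \geq 2$, a simple induction gives $(B_w)^n e_k = 0$ whenever $n \geq k$; hence any finite linear combination $\sum_{k=1}^N a_k e_k$ is annihilated by $(B_w)^N$ and so belongs to $M$. Since $(e_n)_{n \in \N}$ is a basis of $X$, such finite combinations are dense in $X$, and therefore $M$, and a fortiori $\NS(B_w) \supset M$, is dense in $X$.

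With $\NS(B_w)$ dense, alternative (c) of Theorem~\ref{Trichotomy} is ruled out, and I am left with (a) or (b). Alternative (b) is precisely conclusion (b) of the corollary, so only (a) requires further work. In case (a), $B_w$ admits a residual set of irregular vectors. I would also note that the basis hypothesis makes $X$ separable: finite linear combinations of the $e_n$'s with coefficients in a fixed countable dense subset of $\K$ form a countable dense subset of $X$. Then, mimicking the last step of the proof of Theorem~\ref{Trichotomy}(B), I would invoke \cite[Theorem~10]{BerBonMulPer15} to promote the residual set of irregular vectors in a separable Fr\'echet space to a dense scrambled set, so that $B_w$ is densely Li-Yorke chaotic.

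The only non-elementary ingredient is this final upgrade from a residual set of irregular vectors to a dense scrambled set; everything else is an immediate computation with the canonical basis together with an appeal to the trichotomy, so I do not anticipate a genuine obstacle. One could alternatively try to apply Theorem~\ref{Trichotomy}(B) directly, which would require verifying that $\NS(B_w)$ is \emph{not} closed in $X$ whenever (b) fails; this is a slightly more delicate point, since $\NS(B_w)$ need not be a linear subspace, so routing the argument through case (a) of the trichotomy and \cite[Theorem~10]{BerBonMulPer15} appears cleaner.
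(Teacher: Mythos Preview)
Your proof is correct and follows the same route as the paper, which simply observes that $\NS(B_w)$ is dense and invokes the trichotomy of Theorem~\ref{Trichotomy}. You have spelled out details the paper's one-line argument leaves implicit --- notably why $\NS(B_w)$ is dense (via finite-support vectors) and the separability-based upgrade from a residual set of irregular vectors to a dense scrambled set via \cite[Theorem~10]{BerBonMulPer15}.
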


\begin{proof}
Since $\NS(B_w)$ is dense in $X$, the result follows from the above trichotomy.
\end{proof}

In other words, the unilateral weighted backward shift $B_w$ on $X$ is Li-Yorke chaotic
if and only if there is a vector $(x_n)_{n \in \N} \in X$ such that
\[
(w_1 \cdots w_n x_{n+1},w_2 \cdots w_{n+1} x_{n+2},w_3 \cdots w_{n+2} x_{n+3},\ldots) \not\to 0 \text{ in } X \text{ as } n \to \infty.
\]
In particular, the unilateral unweighted backward shift $B$ on $X$ is Li-Yorke chaotic (provided it is well-defined, of course)
if and only if there is a vector $(x_n)_{n \in \N} \in X$ such that
\[
(x_{n+1},x_{n+2},x_{n+3},\ldots) \not\to 0 \text{ in } X \text{ as } n \to \infty.
\]
For example, this is not the case if $X = c_0(\N)$ or $\ell^p(\N)$ for any $p \in [1,\infty)$, but it is true if $X = \K^\N$.

\begin{corollary}\label{BWBS}
Suppose that $X$ is a Fr\'echet sequence space over $\Z$ in which the sequence $(e_n)_{n \in \Z}$ of canonical vectors is a basis,
$w\!:= (w_n)_{n \in \Z}$ is a sequence of nonzero scalars, and the bilateral weighted backward shift $B_w$ is a well-defined 
operator on $X$. If the sequence $(w_{-n} \cdots w_{-1} e_{-n})_{n \in \N}$ has a subsequence converging to zero, then either
\begin{itemize}
\item [\rm (a)] $B_w$ is densely Li-Yorke chaotic, or
\item [\rm (b)] $\displaystyle \lim_{n \to \infty} (B_w)^n x = 0$ for every $x \in X$.
\end{itemize}
\end{corollary}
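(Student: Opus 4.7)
The plan is to apply the trichotomy in Theorem~\ref{Trichotomy} to $T\!:= B_w$. Once density of $\NS(B_w)$ in $X$ is established, option (c) of the trichotomy is excluded, so exactly one of the remaining alternatives holds: either $B_w$ has a residual set of irregular vectors, or $\lim_n (B_w)^n x = 0$ for every $x \in X$. The second possibility is conclusion (b) of the corollary. In the first, the presence of a Schauder basis forces $X$ to be separable, and then the argument at the end of the proof of Theorem~\ref{Trichotomy} (appeal to \cite[Theorem~10]{BerBonMulPer15}) upgrades Li-Yorke chaos to \emph{dense} Li-Yorke chaos, which is conclusion (a).

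Everything therefore reduces to showing that $\NS(B_w)$ is dense in $X$. Because $(e_n)_{n \in \Z}$ is a basis, the linear span $E\!:= \spa\{e_n : n \in \Z\}$ is dense in $X$, and it suffices to prove $E \subset \NS(B_w)$. A direct iteration of $B_w e_k = w_{k-1} e_{k-1}$ gives
\[
(B_w)^n e_k = C_k \cdot (B_w)^{n-k} e_0 \qquad (n \geq \max(k,0)),
\]
where $C_k$ is a nonzero constant depending only on $k$: explicitly, $C_k\!:= w_0 w_1 \cdots w_{k-1}$ for $k \geq 1$, $C_0\!:= 1$, and $C_k\!:= (w_{-1} w_{-2} \cdots w_k)^{-1}$ for $k \leq -1$, all well-defined because the weights are nonzero.

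By hypothesis there is a subsequence $(N_j)$ with $(B_w)^{N_j} e_0 \to 0$ in $X$. Since each iterate $B_w^m$ is continuous, $(B_w)^{N_j + m} e_0 = B_w^m \bigl((B_w)^{N_j} e_0\bigr) \to 0$ for every fixed $m \geq 0$. For an arbitrary $v\!:= \sum_{k \in F} c_k e_k \in E$ with $k_{\max}\!:= \max F$, set $n_j\!:= N_j + k_{\max}$ (large enough so that the displayed factorization applies to each $k \in F$); then
\[
(B_w)^{n_j} v = \sum_{k \in F} c_k C_k \, (B_w)^{N_j + (k_{\max} - k)} e_0,
\]
a finite sum each of whose terms tends to $0$, so $(B_w)^{n_j} v \to 0$ and $v \in \NS(B_w)$, as required.

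The main point requiring care, and the heart of the argument, is the production of a \emph{single} subsequence along which an arbitrary finite combination tends to zero simultaneously: the hypothesis only supplies such a subsequence for $e_0$ itself. Continuity of $B_w$ resolves this, converting the one subsequence for $e_0$ into simultaneous subsequences for all the nonnegative shifts $B_w^m e_0$, which is exactly what the factorization $(B_w)^n e_k = C_k (B_w)^{n-k} e_0$ demands.
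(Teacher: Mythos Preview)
Your proof is correct and follows essentially the same approach as the paper: both arguments show that $\NS(B_w)$ contains the span of the canonical vectors by factoring $(B_w)^n e_k$ through iterates of $e_0$ and using continuity of the powers $B_w^m$ to pass from the single subsequence for $e_0$ to a common subsequence for any finite linear combination, then appeal to the trichotomy. Your write-up makes the separability of $X$ explicit (needed for the ``densely'' conclusion), which the paper leaves implicit when it says the result follows from the trichotomy.
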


\begin{proof}
The additional hypothesis on the weights means that there is an increasing sequence $(n_j)_{j \in \N}$ of positive integers such that
\[
(B_w)^{n_j}e_0 \to 0 \ \text{ as } j \to \infty.
\]
For each $k \geq 1$, there is a constant $a_k$ such that
\[
(B_w)^k e_k = a_k e_0.
\]
For each $k \leq -1$, there is a constant $b_k$ such that
\[
(B_w)^{-k} e_0 = b_k e_k.
\]
Thus, for every $k \in \Z$,
\begin{equation}
(B_w)^{n_j + k} e_k \to 0 \ \text{ as } j \to \infty.\label{limit}
\end{equation}
Given $m \geq 1$ and $k \in \{-m,\ldots,m\}$, by applying $(B_w)^{m-k}$ in (\ref{limit}), we see that
\[
(B_w)^{n_j + m} e_k \to 0 \ \text{ as } j \to \infty.
\]
Thus, $(B_w)^{n_j + m} x \to 0$ as $j \to \infty$, for every $x \in \spa(\{e_{-m},\ldots,e_m\})$. 
This proves that the set $\NS(B_w)$ contains every bilateral sequence of finite support, and so it is dense in~$X$.
Thus, the result follows from the previous trichotomy.
\end{proof}

\begin{remark}
The additional hypothesis on the weights is essential for the validity of the above corollary.
Indeed, for the unweighted backward shift on $\ell^2(\Z)$, both (a) and (b) are false.
\end{remark}

\begin{theorem}\label{BWBSThm}
Suppose that $X$ is a Fr\'echet sequence space over $\Z$ in which the sequence $(e_n)_{n \in \Z}$ of canonical vectors is a basis,
$w\!:= (w_n)_{n \in \Z}$ is a sequence of nonzero scalars, and the bilateral weighted backward shift $B_w$ is a well-defined 
operator on $X$. Then $B_w$ is Li-Yorke chaotic if and only if the following conditions hold:
\begin{itemize}
\item [\rm (a)] The sequence $(w_{-n} \cdots w_{-1} e_{-n})_{n \in \N}$ has a subsequence converging to zero.
\item [\rm (b)] There is a vector $(x_n)_{n \in \Z} \in X$ such that
\[
(w_j \cdots w_{j+n-1} x_{j+n})_{j \in \Z} \not\to 0 \text{ in } X \text{ as } n \to \infty.
\]
\end{itemize}
\end{theorem}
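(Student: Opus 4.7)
The plan is to obtain sufficiency as a direct consequence of Corollary~\ref{BWBS}, and to prove necessity by using the irregular vector produced by Li-Yorke chaos to extract, via a single-basis-vector isolation argument, an orbit-to-zero statement for $e_0$.

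For sufficiency, assume (a) and (b). Condition (a) is exactly the hypothesis of Corollary~\ref{BWBS}, whose conclusion yields the dichotomy that either $B_w$ is densely Li-Yorke chaotic, or $\lim_{n \to \infty}(B_w)^n x = 0$ for every $x \in X$. Condition (b) furnishes a vector whose orbit does not converge to zero, ruling out the second alternative, so $B_w$ is (densely) Li-Yorke chaotic.

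For necessity, the characterization of Li-Yorke chaos for linear operators on Fr\'echet spaces recalled in the introduction produces an irregular vector $\psi = (x_k)_{k \in \Z} \in X$ together with an increasing sequence $(n_j)$ of positive integers such that $(B_w)^{n_j}\psi \to 0$ in $X$ while $((B_w)^n \psi)_n$ is unbounded. Unboundedness precludes convergence to zero, so $x\!:= \psi$ immediately witnesses (b). To derive (a), I would fix $k_0 \in \Z$ with $x_{k_0} \neq 0$ and expand in the canonical basis:
\[
(B_w)^{n_j}\psi \;=\; \sum_{k \in \Z} x_k\,(w_{k-n_j}\cdots w_{k-1})\,e_{k-n_j}.
\]
Since the indices $\{k - n_j : k \in \Z\}$ are pairwise distinct, this is the basis expansion of $(B_w)^{n_j}\psi$, and the coefficient of $e_{k_0 - n_j}$ equals $x_{k_0}(w_{k_0 - n_j} \cdots w_{k_0 - 1})$. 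Applying the one-dimensional projection $P_m : y \mapsto y_m e_m$ at position $m = k_0 - n_j$ isolates exactly $x_{k_0}(B_w)^{n_j}e_{k_0}$. Using that the family $\{P_m\}_{m \in \Z}$ is equicontinuous on $X$ (a consequence of the bilateral Schauder basis property together with the Banach-Steinhaus theorem applied to the partial sum operators), the convergence $(B_w)^{n_j}\psi \to 0$ transfers to $x_{k_0}(B_w)^{n_j}e_{k_0} \to 0$, and hence $(B_w)^{n_j}e_{k_0} \to 0$ in $X$.

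To finish, a direct computation gives the translation identity $(B_w)^n e_{k_0} = (w_0 \cdots w_{k_0 - 1})\,(B_w)^{n-k_0} e_0$ for $k_0 \geq 1$ and $n \geq k_0$, with the symmetric formula $(B_w)^n e_{k_0} = (w_{k_0} \cdots w_{-1})^{-1}(B_w)^{n-k_0} e_0$ for $k_0 \leq -1$; the scalar factor is nonzero because every weight is. Thus $(B_w)^{n_j - k_0} e_0 \to 0$ in $X$, and the sequence $m_\ell\!:= n_\ell - k_0$ (which still tends to $\infty$) delivers condition (a). The most delicate step is the equicontinuity of the projections $P_m$ at the \emph{moving} positions $m = k_0 - n_j \to -\infty$; this is the point that relies most essentially on the bilateral Schauder basis hypothesis and on the continuous embedding $X \hookrightarrow \K^\Z$, and that I would need to pin down carefully.
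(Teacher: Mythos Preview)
Your proof is correct and follows essentially the same route as the paper's: sufficiency via Corollary~\ref{BWBS}, and necessity by taking a (semi-)irregular vector, isolating the coordinate at a nonzero index via the equicontinuous family of coordinate projections $y\mapsto y_m e_m$, and then translating to $e_0$. The paper is slightly terser---it uses a semi-irregular vector and compresses your translation identity into the single phrase ``which implies (a)''---and it likewise simply asserts the equicontinuity of the projections that you flag as the delicate point, so your caution there is well placed but not a gap relative to the paper.
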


\begin{proof}
Suppose that $B_w$ is Li-Yorke chaotic and let $x\!:= (x_n)_{n \in \Z} \in X$ be a semi-irregular vector for $B_w$.
Since $(B_w)^n(x) \not\to 0$ as $n \to \infty$, (b) holds.
Choose $k \in \Z$ such that $x_k \neq 0$ and let $(n_j)_{j \in \N}$ be an increasing sequence of positive integers such that
$(B_w)^{n_j}(x) \to 0$ as $j \to \infty$.
Given a neighborhood $V$ of $0$ in $X$, the equicontinuity of the family of operators $y \in X \mapsto y_n e_n \in X$, $n \in \Z$,
implies the existence of a neighborhood $U$ of $0$ in $X$ such that
\[
y \in U \ \Longrightarrow \ y_ne_n \in V \text{ for all } n \in \Z.
\]
There exists $j_0 \in \N$ such that
\[
j \geq j_0 \ \Longrightarrow \ (B_w)^{n_j}(x) \in U \ \Longrightarrow \ w_{k-n_j} \cdots w_{k-1} x_k e_{k-n_j} \in V.
\]
This proves that $w_{k-n_j} \cdots w_{k-1} x_k e_{k-n_j} \to 0$ as $j \to \infty$, which implies (a).

Conversely, if (a) and (b) hold, then $B_w$ is (densely) Li-Yorke chaotic by the previous corollary.
\end{proof}

In particular, the bilateral unweighted backward shift $B$ on $X$ is Li-Yorke chaotic (provided it is well-defined, of course)
if and only if the sequence $(e_{-n})_{n \in \N}$ has a subsequence converging to zero and there is a vector 
$(x_n)_{n \in \Z} \in X$ such that
\[
(x_{j+n})_{j \in \Z} \not\to 0 \text{ in } X \text{ as } n \to \infty.
\]
For example, this is not the case if $X = c_0(\Z)$ or $\ell^p(\Z)$ for any $p \in [1,\infty)$, but it is true if $X = \K^\Z$.

\smallskip
It is known that $B$ is {\em hypercyclic} (i.e., it has a dense orbit) if and only if 
there is an increasing sequence $(n_j)_{j \in \N}$ of positive integers such that, for any $k \in \Z$,
\[
e_{k-n_j} \to 0 \ \text{ and } \ e_{k+n_j} \to 0 \ \text{ in } X \text{ as } j \to \infty
\]
(see \cite[Theorem~4.12]{KGroAPer11}).
With these characterizations it is easy to exhibit such a shift $B$ which is Li-Yorke chaotic but not hypercyclic
(although stronger and more sophisticated examples can be found in \cite{MarOprPer13}).
As an illustration, consider the weight sequence $\nu\!:= (\nu_n)_{n \in \Z}$, where $\nu_{-n}\!:= 1/n$ for all $n \geq 1$, and 
\[
(\nu_n)_{n \geq 0}\!:= (1,2,1,2,2^2,2,1,2,2^2,2^3,2^2,2,1,\ldots,1,2,2^2,\ldots,2^n,\ldots,2^2,2,1,\ldots),
\]
a concatenation of blocks of the form $(1,2,2^2,\ldots,2^n,\ldots,2^2,2)$ for $n \geq 1$.
Fix $p \in [1,\infty)$ and consider the weighted $\ell^p$ space
\[
\ell^p(\Z,\nu)\!:= \Big\{x\!:= (x_n)_{n \in \Z} \in \K^\Z : \|x\|_p^p\!:= \sum_{n=-\infty}^\infty |x_n|^p \nu_n < \infty\Big\}.
\]
Clearly the bilateral backward shift $B$ is a well-defined operator on $\ell^p(\Z,\nu)$.
It is not hypercyclic because $\|e_n\|_p \geq 1$ for all $n \geq 0$.
In order to see that it is Li-Yorke chaotic, let $j_n\!:= n(n+1)$ for each $n \in \N$, and consider the vector
\[
x\!:= \sum_{n=1}^\infty 2^{-\frac{n}{p}} e_{j_n} \in \ell^p(\Z,\nu).
\]
Since $e_{-n} \to 0$ as $n \to \infty$ and $\|B^n x\|_p \geq \|B^n(2^{-\frac{n}{p}} e_{j_n})\|_p = 1$ for all $n \in \N$,
we have that $B$ is Li-Yorke chaotic.

\smallskip
We shall now focus on a more concrete class of Fr\'echet sequence spaces known as K\"othe sequence spaces.
Let $J\!:= \N$ or $\Z$. Recall that a {\em K\"othe matrix} ({\em on $J$}) is a family of scalars of the form 
$A\!:= (a_{j,k})_{j \in J,k \in \N}$ such that, for each $j \in J$, there exists $k \in \N$ with $a_{j,k} > 0$,
and $0 \leq a_{j,k} \leq a_{j,k+1}$ for all $j \in J$ and $k \in \N$.
Given a real number $p \in \{0\} \cup [1,\infty)$ and such a K\"othe matrix $A$, 
the associated {\em K\"othe sequence space} $\lambda_p(A,J)$ is the Fr\'echet sequence space defined as follows:
\begin{itemize}
\item If $p \in [1,\infty)$, then 
    \[
    \lambda_p(A,J)\!:= \Big\{(x_j)_{j \in J} \in \K^J : \sum_{j \in J} |a_{j,k} x_j|^p < \infty \text{ for all } k \in \N\Big\}
    \] 
    endowed with the seminorms
    \[
    \|x\|_k\!:= \Big(\sum_{j \in J} |a_{j,k} x_j|^p\Big)^\frac{1}{p} \ \text{ for } x\!:= (x_j)_{j \in J} \in \lambda_p(A,J) \text{ and } k \in \N.
    \]
\item If $p = 0$, then 
    \[
    \lambda_p(A,J)\!:= \Big\{(x_j)_{j \in J} \in \K^J : \lim_{j \in J, |j| \to \infty} a_{j,k} x_j = 0 \text{ for all } k \in \N\Big\}
    \]
    endowed with the seminorms
    \[
    \|x\|_k\!:= \sup_{j \in J} |a_{j,k} x_j| \ \text{ for } x\!:= (x_j)_{j \in J} \in \lambda_p(A,J) \text{ and } k \in \N.
    \]
\end{itemize}
Note that the sequence $(e_j)_{j \in J}$ of canonical vectors in $\K^J$ is a basis of $\lambda_p(A,J)$ for every $p \in \{0\} \cup [1,\infty)$.

Let us adopt the convention that $\frac{0}{0} = 1$.
Given a sequence $w\!:= (w_j)_{j \in J}$ of nonzero scalars, it is well-known that the weighted backward shift
\[
B_w((x_j)_{j \in J})\!:= (w_j x_{j+1})_{j \in J}
\]
is a well-defined operator on $\lambda_p(A,J)$ if and only if, for each $k \in \N$, there exists $m \in \N$ such that
$a_{j,k} = 0$ whenever $a_{j+1,m} = 0$ ($j \in J$), and  
\begin{equation}\label{Kothe-Eq1}
\sup_{j \in J} \frac{a_{j,k} |w_j|}{a_{j+1,m}} < \infty.
\end{equation}

We refer the reader to the books \cite{GKot69,RMeiDVog97} for a detailed study of K\"othe sequence spaces.

The next result characterizes Li-Yorke chaos for unilateral weighted backward shifts.

\begin{theorem}\label{Kothe-LY-T1}
Consider a K\"othe sequence space $X\!:= \lambda_p(A,\N)$, 
where $A\!:= (a_{j,k})$ is a K\"othe matrix on $\N$ with nonzero entries and $p \in \{0\} \cup [1,\infty)$. 
Let $w\!:= (w_n)_{n \in \N}$ be a sequence of nonzero scalars such that 
the unilateral weighted backward shift $B_w$ is a well-defined operator on $X$.
Then $B_w$ is Li-Yorke chaotic if and only if there exists $k \in \N$ such that
\[
\sup\Big\{\frac{a_{i,k} |w_i \cdots w_j|}{a_{j+1,\ell}} : i,j \in \N, i \leq j\Big\} = \infty \ \ \text{ for all } \ell \in \N.
\]
\end{theorem}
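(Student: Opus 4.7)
The plan is to use the dichotomy of Corollary~\ref{UWBS} to reduce Li-Yorke chaos to the existence of an orbit that does not converge to zero, and then to translate this into the stated ratio condition by a coordinate-wise comparison between seminorms. The connecting identity is that, for $n \leq j$,
\[
B_w^n e_{j+1} = (w_{j+1-n}\cdots w_j)\,e_{j+1-n}, \qquad \|e_{j+1}\|_\ell = a_{j+1,\ell},
\]
so writing $i := j+1-n$, the ratio appearing in the hypothesis equals $\|B_w^n e_{j+1}\|_k/\|e_{j+1}\|_\ell$. By Corollary~\ref{UWBS}, $B_w$ is Li-Yorke chaotic if and only if there exist $x \in X$, $k \in \N$, $\delta > 0$, and an increasing sequence $(n_m)$ with $\|B_w^{n_m} x\|_k \geq \delta$ for every $m$.

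For necessity, I would argue by contradiction: if the condition fails for the $k$ supplied by the orbit, there are $\ell \in \N$ and $C > 0$ with $a_{i,k}|w_i\cdots w_j| \leq C\,a_{j+1,\ell}$ for all $i \leq j$. Expanding coordinate-wise and reindexing $j = i+n$,
\[
\|B_w^n x\|_k^p = \sum_{j>n} a_{j-n,k}^p\,|w_{j-n}\cdots w_{j-1}|^p\,|x_j|^p \leq C^p \sum_{j>n} a_{j,\ell}^p\,|x_j|^p,
\]
which is a tail of the convergent sum $\|x\|_\ell^p$ (or, for $p=0$, of the null sequence $(a_{j,\ell}|x_j|)_j$) and therefore tends to $0$ as $n \to \infty$, contradicting $\|B_w^{n_m}x\|_k \geq \delta$.

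For sufficiency, I would build $x$ explicitly. The continuity of each iterate $B_w^n : X \to X$ furnishes $\ell_n \in \N$ and $M_n > 0$ with $a_{i,k}|w_i\cdots w_{i+n-1}| \leq M_n\,a_{i+n,\ell_n}$ for every $i$. Consequently, once $\ell \geq \max_{n\leq N}\ell_n$, the ratio in the hypothesis is uniformly bounded over all pairs with $j-i+1 \leq N$, so the infinite total sup (which holds for \emph{every} $\ell$) must be attained by pairs with $j-i+1 > N$. A diagonal induction then produces $(i_m,j_m)$ with $j_m-i_m+1 > m$, $j_m > j_{m-1}$, and
\[
\frac{a_{i_m,k}|w_{i_m}\cdots w_{j_m}|}{a_{j_m+1,m}} > 2^m.
\]
Setting $n_m := j_m - i_m + 1 \to \infty$ and $c_m := 1/(|w_{i_m}\cdots w_{j_m}|\,a_{i_m,k})$, the estimate $|c_m|\,a_{j_m+1,\ell} < 2^{-m}$ for $m \geq \ell$ guarantees that $x := \sum_m c_m e_{j_m+1}$ lies in $X$. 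Since the $j_m$'s are distinct, the $i_m$-th coordinate of $B_w^{n_m} x$ is exactly $c_m w_{i_m}\cdots w_{j_m}$, so $\|B_w^{n_m}x\|_k \geq 1$ and Corollary~\ref{UWBS} concludes.

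The main obstacle is the selection of $(i_m, j_m)$ with $n_m \to \infty$: the raw hypothesis only furnishes high-ratio pairs for each $\ell$ separately, and bounded-length pairs can never produce a non-vanishing orbit (again by continuity of each $B_w^n$). The continuity bounds $(\ell_n, M_n)$ are exactly what allow short pairs to be eliminated for sufficiently large $\ell$, making the diagonal inductive choice possible.
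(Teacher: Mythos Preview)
Your proposal is correct and follows essentially the same route as the paper, which only spells out the bilateral case (Theorem~\ref{Kothe-LY-T2}) and leaves the unilateral adaptation to the reader: reduce via Corollary~\ref{UWBS} to the existence of an orbit not tending to zero, use continuity of the iterates $B_w^n$ to force the selected pairs $(i_m,j_m)$ to have unbounded length, and assemble the witness $x$ from suitably rescaled canonical vectors at the indices $j_m+1$. One minor imprecision worth tightening: the literal claim $j_m-i_m+1>m$ does not follow just from taking $\ell=m$ (that would require $m\geq\max_{n\leq m}\ell_n$, which need not hold), but all you actually use is $n_m\to\infty$, and that is indeed forced by your continuity bounds once $m$ is large; the condition $j_m>j_{m-1}$ is unproblematic here because for each fixed $j\in\N$ there are only finitely many $i\leq j$, so the supremum restricted to $j\leq j_{m-1}$ is automatically finite.
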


Below is the corresponding result for bilateral weighted backward shifts.

\begin{theorem}\label{Kothe-LY-T2}
Consider a K\"othe sequence space $X\!:= \lambda_p(A,\Z)$, 
where $A\!:= (a_{j,k})$ is a K\"othe matrix on $\Z$ with nonzero entries and $p \in \{0\} \cup [1,\infty)$. 
Let $w\!:= (w_n)_{n \in \Z}$ be a sequence of nonzero scalars such that 
the bilateral weighted backward shift $B_w$ is a well-defined operator on $X$.
Then $B_w$ is Li-Yorke chaotic if and only if the following conditions hold: 
\begin{itemize}
\item [\rm (a)] There is an increasing sequence $(n_i)_{i \in \N}$ of positive integers such that
  \[
  \lim_{i \to \infty} a_{-n_i,k} w_{-n_i} \cdots w_{-1} = 0 \ \ \text{ for all } k \in \N.
  \]
\item [\rm (b)] There exists $k \in \N$ such that
  \[
  \sup\Big\{\frac{a_{i,k} |w_i \cdots w_j|}{a_{j+1,\ell}} : i,j \in \Z, i \leq j\Big\} = \infty \ \ \text{ for all } \ell \in \N.
  \]
\end{itemize}
\end{theorem}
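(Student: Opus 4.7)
The plan is to deduce Theorem~\ref{Kothe-LY-T2} from Theorem~\ref{BWBSThm}, by translating its abstract hypotheses into the concrete K\"othe-matrix statements. Throughout I use that convergence in $\lambda_p(A,\Z)$ is convergence in each seminorm $\|\cdot\|_k$ and that $\|e_m\|_k = a_{m,k}$ for every $m \in \Z$ and every $k \in \N$ (the same formula whether $p = 0$ or $p \in [1,\infty)$). Condition~(a) is then a direct translation of Theorem~\ref{BWBSThm}(a): the sequence $(w_{-n}\cdots w_{-1} e_{-n})_{n \in \N}$ has a subsequence converging to zero in $X$ if and only if there is an increasing $(n_i)$ with $a_{-n_i,k}|w_{-n_i}\cdots w_{-1}| \to 0$ for every $k$, and the sign is irrelevant for a limit to zero. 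So I may pass between (a) and Theorem~\ref{BWBSThm}(a) freely in both directions.

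For the sufficiency of (a) and (b), I verify Theorem~\ref{BWBSThm}(b) using the uniform boundedness principle for Fr\'echet spaces. Fix the index $k$ provided by (b). A short induction gives $B_w^{\,j-i+1}e_{j+1} = w_iw_{i+1}\cdots w_j\,e_i$, so
\[
\frac{\|B_w^{\,j-i+1}e_{j+1}\|_k}{\|e_{j+1}\|_\ell} = \frac{a_{i,k}|w_i\cdots w_j|}{a_{j+1,\ell}}\,,
\]
and condition~(b) says precisely that for this fixed $k$ and every $\ell \in \N$, the ratios $\|B_w^n y\|_k/\|y\|_\ell$ are unbounded as $n$ and $y$ vary. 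Hence the family $\{B_w^n : n \in \N\}$ is not equicontinuous on $X$, and the Banach-Steinhaus theorem for Fr\'echet spaces produces a vector $x \in X$ whose orbit is unbounded; in particular $B_w^n x \not\to 0$ in $X$. Combined with condition~(a), Theorem~\ref{BWBSThm} then yields Li-Yorke chaos.

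For the necessity, assume $B_w$ is Li-Yorke chaotic. Theorem~\ref{BWBSThm}(a) gives condition~(a) as already noted. To derive (b), I invoke the characterization of Li-Yorke chaos recalled in the introduction to obtain an \emph{irregular} vector $x \in X$ for $B_w$, so that $\sup_n \|B_w^n x\|_k = \infty$ for some $k \in \N$. If (b) failed, this same $k$ would admit an $\ell \in \N$ with
\[
M_k\!:= \sup\Big\{\frac{a_{i,k}|w_i\cdots w_j|}{a_{j+1,\ell}} : i,j \in \Z,\ i \leq j\Big\} < \infty.
\]
Factoring $a_{j,k}|w_j\cdots w_{j+n-1}| = \bigl(a_{j,k}|w_j\cdots w_{j+n-1}|/a_{j+n,\ell}\bigr)\cdot a_{j+n,\ell}$, bounding the first factor by $M_k$, and then summing over $j \in \Z$ (or taking the supremum, for $p = 0$) after the reindexing $m = j+n$, would yield $\|B_w^n x\|_k \leq M_k \|x\|_\ell$ for every $n \in \N$, contradicting the unboundedness of the $k$-th seminorm along the orbit of $x$.

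The main technical subtlety is that Theorem~\ref{BWBSThm}(b) by itself only supplies a vector whose orbit fails to converge to zero, which is too weak to contradict a pointwise boundedness estimate. Promoting this to an \emph{irregular} (rather than merely semi-irregular) vector via the introduction's characterization is what forces condition~(b); everything else is bookkeeping with the K\"othe seminorms.
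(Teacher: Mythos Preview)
Your proof is correct. The necessity direction is essentially identical to the paper's: both assume the negation of (b), bound $\|B_w^n x\|_k \le C_k\|x\|_{\ell_k}$ by the same factoring and reindexing argument, and contradict the existence of an irregular vector.

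The sufficiency direction, however, is genuinely different. You show that condition~(b) negates equicontinuity of $\{B_w^n : n \in \N\}$ and then invoke the Banach--Steinhaus theorem for Fr\'echet spaces to produce a vector with unbounded orbit, which certainly satisfies Theorem~\ref{BWBSThm}(b). The paper instead constructs such a vector explicitly: it builds, by a careful two-case induction, sequences $(t_s)$, $(i_s)$, $(j_s)$ with $a_{i_s,k}|w_{i_s}\cdots w_{j_s}| > 2^s a_{j_s+1,t_s}$, strictly increasing $n_s = j_s - i_s + 1$, and distinct $j_s$, and then sets $x_{j_s+1} = 1/(2^s a_{j_s+1,t_s})$; separate verifications show $x \in X$ (for $p=0$ and $p \in [1,\infty)$) and $\|B_w^{n_s} x\|_k > 1$. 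Your route is shorter, handles $p=0$ and $p \ge 1$ uniformly, and mirrors the Banach--Steinhaus arguments the paper itself uses in Theorems~\ref{MainLYCX} and~\ref{MainLY}. The paper's route is constructive: it exhibits an explicit witness rather than an existence statement from a Baire-category principle.
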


Below we will prove Theorem~\ref{Kothe-LY-T2}. 
The proof of Theorem~\ref{Kothe-LY-T1} is similar (but slightly simpler) and is left to the reader.

\begin{proof}
Suppose that properties (a) and (b) hold. 
We shall prove that $B_w$ is Li-Yorke chaotic by applying Theorem~\ref{BWBSThm}.
Since (a) is equivalent to property (a) in this theorem, it remains to show that property (b) in this theorem is also true.
For this purpose, fix $k \in \N$ as in (b).
For each $n \in \N$, the continuity of $B_w^n$ implies the existence of a constant $C_n \in (0,\infty)$ and an integer $r_n \in \N$ such that
\[
\|B_w^n(x)\|_k \leq C_n \|x\|_{r_n} \ \ \text{ for all } x \in X.
\]
By applying this inequality to the canonical vectors $e_j$, we see that
\begin{equation}\label{Kothe-LY-F1}
\sup\Big\{\frac{a_{i,k} |w_i \cdots w_j|}{a_{j+1,r_n}} : i,j \in \Z, j - i = n-1\Big\} \leq C_n \ \ \text{ for all } n \in \N.
\end{equation}
Without loss of generality, we may assume that the sequence $(r_n)_{n \in \N}$ is strictly increasing.
We shall construct a sequence $(t_s)_{s \in \N}$ of positive integers and two sequences $(i_s)_{s \in \N}$ and $(j_s)_{s \in \N}$
of integers such that the following properties hold:
\begin{itemize}
\item [($\alpha$)] The sequence $(t_s)_{s \in \N}$ is strictly increasing.
\item [($\beta$)] For every $s \in \N$, we have that $i_s \leq j_s$ and $a_{i_s,k} |w_{i_s} \cdots w_{j_s}| > 2^s a_{j_s + 1,t_s}$.
\item [($\gamma$)] The sequence $(n_s)_{s \in \N}$, where $n_s\!:= j_s - i_s + 1$, is strictly increasing.
\item [($\delta$)] $j_s \neq j_t$ whenever $s \neq t$.
\end{itemize}
We begin by putting $t_1\!:= r_1$ and by choosing $i_1,j_1 \in \Z$ such that
\[
i_1 \leq j_1 \ \ \text{ and } \ \ \frac{a_{i_1,k} |w_{i_1} \cdots w_{j_1}|}{a_{j_1 + 1,t_1}} > 2.
\]
The equality in (b) for $\ell\!:= t_1$ guarantees the existence of such a pair $i_1,j_1$.
Suppose that $t_1,\ldots,t_s$, $i_1,\ldots,i_s$ and $j_1,\ldots,j_s$ have already been chosen with the desired properties.
Let
\[
C\!:= \max\{C_1,\ldots,C_{n_s+1}\}
\]
and choose $t_{s+1} \in \N$ such that
\[
t_{s+1} > \max\{t_s,r_{n_s + 1}\}.
\]
It follows from (\ref{Kothe-LY-F1}) that
\begin{equation}\label{Kothe-LY-F2}
\sup\Big\{\frac{a_{i,k} |w_i \cdots w_j|}{a_{j+1,t_{s+1}}} : i,j \in \Z, 0 \leq j - i \leq n_s\Big\} \leq C.
\end{equation}
Now we have to consider two possibilities.

\medskip\noindent
{\sc Case 1.} There exists $j_0 \in \Z$ such that
\begin{equation}\label{Kothe-LY-F3}
\sup\Big\{\frac{a_{i,k} |w_i \cdots w_{j_0}|}{a_{j_0 + 1,t_{s+1}}} : i \in \Z, i \leq j_0\Big\} = \infty.
\end{equation}

\smallskip
Then (\ref{Kothe-LY-F3}) holds with $j$ instead of $j_0$ for every $j \geq j_0$.
Hence, there exist $i_{s+1},j_{s+1} \in \Z$ such that
\[
j_{s+1} > \max\{j_1,\ldots,j_s\}, \ \ i_{s+1} \leq j_{s+1} \ \text{ and } \ 
\frac{a_{i_{s+1},k} |w_{i_{s+1}} \cdots w_{j_{s+1}}|}{a_{j_{s+1} + 1,t_{s+1}}} > \max\{C,2^{s+1}\}.
\]
Thus, (\ref{Kothe-LY-F2}) implies that $n_{s+1} = j_{s+1} - i_{s+1} + 1 > n_s$.
 
\medskip\noindent
{\sc Case 2.} For every $j \in \Z$,
\begin{equation}\label{Kothe-LY-F4}
K_j\!:= \sup\Big\{\frac{a_{i,k} |w_i \cdots w_j|}{a_{j+1,t_{s+1}}} : i \in \Z, i \leq j\Big\} < \infty.
\end{equation}

\smallskip
In this case, we apply the equality in (b) for $\ell\!:= t_{s+1}$ to obtain a pair $i_{s+1},j_{s+1} \in \Z$ such that
\[
i_{s+1} \leq j_{s+1} \ \text{ and } \ 
\frac{a_{i_{s+1},k} |w_{i_{s+1}} \cdots w_{j_{s+1}}|}{a_{j_{s+1} + 1,t_{s+1}}} > \max\{C,K_{j_1},\ldots,K_{j_s},2^{s+1}\}.
\]
Then $j_{s+1} \not\in \{j_1,\ldots,j_s\}$ and, as in Case~1, $n_{s+1} > n_s$.

\smallskip
This completes the construction of the sequences satisfying ($\alpha$)--($\delta$). Now, define
\[
x_{j_s + 1}\!:= \frac{1}{2^s a_{j_s + 1,t_s}} \ \ (s \in \N) \ \ \text{ and } \ \ x_j\!:= 0 \ \text{ if } j \in \Z \backslash \{j_s + 1 : s \in \N\}.
\]
Property ($\delta$) ensures that the sequence $x\!:= (x_j)_{j \in \Z}$ is well-defined.
Additionally, from its definition we see that $x$ satisfies
\[
\lim_{j \to \pm \infty} a_{j,\ell}\, x_j = 0
\]
for every $\ell \in \N$, so that $x \in X$ in case of $p = 0$.
Given $\ell \in \N$, property ($\alpha$) implies the existence of an integer $s_0 \in \N$ such that $t_s \geq \ell$ whenever $s > s_0$.
Therefore, if $p \in [1,\infty)$, then
\begin{align*}
\sum_{j \in \Z} |a_{j,\ell}\, x_j|^p &= \sum_{s=1}^\infty |a_{j_s + 1,\ell}\, x_{j_s + 1}|^p
  \leq \sum_{s=1}^{s_0} |a_{j_s + 1,\ell}\, x_{j_s + 1}|^p + \sum_{s=s_0+1}^\infty |a_{j_s + 1,t_s} x_{j_s + 1}|^p\\
  &= \sum_{s=1}^{s_0} |a_{j_s + 1,\ell}\, x_{j_s + 1}|^p + \sum_{s=s_0+1}^\infty \Big(\frac{1}{2^s}\Big)^p < \infty.
\end{align*}
Hence, $x \in X$ no matter what is the value of $p$.
By property ($\beta$), for any $s \in \N$,
\[
\|B_w^{n_s}(x)\|_k \geq |a_{i_s,k} w_{i_s} \cdots w_{i_s + n_s - 1} x_{i_s + n_s}| = |a_{i_s,k} w_{i_s} \cdots w_{j_s} x_{j_s + 1}| > 1.
\]
In view of property ($\gamma$), we conclude that $B_w^n(x) \not\to 0$ as $n \to \infty$.
Thus, by Theorem~\ref{BWBSThm}, $B_w$ is Li-Yorke chaotic.

Conversely, suppose that $B_w$ is Li-Yorke chaotic.
We know that property (a) holds, since it is equivalent to property (a) in Theorem~\ref{BWBSThm}.
Suppose, by contradiction, that property (b) is false.
Then, for each $k \in \N$, there exists $\ell_k \in \N$ such that
\[
C_k\!:= \sup\Big\{\frac{a_{i,k} |w_i \cdots w_j|}{a_{j+1,\ell_k}} : i,j \in \Z, i \leq j\Big\} < \infty.
\]
Take $x\!:= (x_j)_{j \in \Z} \in X$, $k \in \N$ and $n \in \N$. If $p \in [1,\infty)$, then
\[
\|B_w^n(x)\|_k^p = \sum_{j=-\infty}^\infty |a_{j,k} w_j \cdots w_{j+n-1} x_{j+n}|^p
  \leq \sum_{j=-\infty}^\infty |C_k a_{j+n,\ell_k} x_{j+n}|^p = C_k^p \|x\|_{\ell_k}^p.
\]
If $p = 0$, then
\[
\|B_w^n(x)\|_k = \sup_{j \in \Z} |a_{j,k} w_j \cdots w_{j+n-1} x_{j+n}|
  \leq \sup_{j \in \Z} |C_k a_{j+n,\ell_k} x_{j+n}| = C_k \|x\|_{\ell_k}.
\]
In any case, we see that the sequence $(B_w^n(x))_{n \in \N}$ is bounded for all $x \in X$.
Thus, $B_w$ does not admit an irregular vector, which contradicts the fact that $B_w$ is Li-Yorke chaotic.
\end{proof}

For unilateral weighted backward shifts $B_w$ on K\"othe sequence spaces, 
several properties that are equivalent to $B_w$ being Li-Yorke chaotic were obtained in
\cite[Theorem~1]{XWuPZhu13} and \cite[Theorem~3.3]{WuCheZhu14}.
Nevertheless, to the best of the authors knowledge, no characterization based solely on the entries of the K\"othe matrix
and the weights of the shift were known before.
Theorem~\ref{Kothe-LY-T1} fills this gap in the case of K\"othe sequence spaces defined by a K\"othe matrix with nonzero entries.
Moreover, Theorem~\ref{Kothe-LY-T2} gives the corresponding result in the case of bilateral shifts.

\begin{example}
Let $J\!:= \N$ or $\Z$. If $a_{j,k}\!:= 1$ for all $j \in J$ and $k \in \N$, then
\[
\lambda_0(A,J) = c_0(J) \ \ \text{ and } \ \ \lambda_p(A,J) = \ell^p(J) \text{ for } p \in [1,\infty).
\]
In this case, Theorems~\ref{Kothe-LY-T1} and~\ref{Kothe-LY-T2} recover 
Corollaries~\ref{CorLYCX1}, \ref{CorLYCX2}, \ref{CorLY2} and~\ref{CorLY3}
in the case of nonzero weights.
This provides another approach to obtain the characterizations of the Li-Yorke chaotic weighted shifts
on the classical Banach sequence spaces $c_0$ and $\ell^p$.
\end{example}

\begin{example}
Let $J\!:= \N$ or $\Z$. If $a_{j,k}\!:= (|j| + 1)^k$ for all $j \in J$ and $k \in \N$, then
\[
\lambda_1(A,J) = s(J),
\]
the {\em space of rapidly decreasing sequences on $J$}, which is a classical example of a non-normable Fr\'echet sequence space.
As an illustration, let us exhibit a weighted shift $B_w$ on $s(\Z)$ which is Li-Yorke chaotic but not hypercyclic.
Consider the weight sequence $w\!:= (w_n)_{n \in \Z}$, where $w_{-n}\!:= \frac{1}{2}$ for all $n \geq 0$, and
\[
(w_n)_{n \geq 1}\!:= \Big(\frac{1}{2},2,\frac{1}{2},\frac{1}{2},2,2,\frac{1}{2},\frac{1}{2},\frac{1}{2},2,2,2,\ldots\Big),
\]
a concatenation of blocks of the form $\big(\frac{1}{2},\ldots,\frac{1}{2},2,\ldots,2\big)$ 
with $n$ numbers $\frac{1}{2}$ followed by $n$ numbers $2$, for $n \geq 1$.
Since (\ref{Kothe-Eq1}) holds, the bilateral weighted backward shift $B_w$ is a well-defined operator on $s(\Z)$.
Since
\[
\lim_{n \to \infty} (|-n|+1)^k w_{-n} \cdots w_{-1} = \lim_{n \to \infty} \frac{(n+1)^k}{2^n} = 0,
\]
for all $k \in \N$, and
\[
\sup_{i \leq j} \frac{(|i|+1) |w_i \cdots w_j|}{(|j+1| + 1)^\ell}
\geq \sup_{n \in \N} \frac{w_{n^2+1} \cdots w_{n^2+n}}{(n^2+n+2)^\ell}
= \sup_{n \in \N} \frac{2^n}{(n^2+n+2)^\ell} = \infty,
\]
for all $\ell \in \N$, it follows from Theorem~\ref{Kothe-LY-T2} that $B_w$ is Li-Yorke chaotic.
On the other hand, by \cite[Theorem~4.13]{KGroAPer11}, $B_w$ is hypercyclic if and only if there is an increasing sequence
$(n_j)_{j \in \N}$ of positive integers such that, for any $\ell \in \Z$,
\begin{equation}\label{Kothe-LY-F5}
w_{\ell - n_j} \cdots w_{\ell - 1} e_{\ell - n_j} \to 0 \ \text{ and } \ \frac{e_{\ell + n_j}}{w_\ell \cdots w_{\ell + n_j -1}} \to 0
\ \text{ as } j \to \infty.
\end{equation}
However, in our example,
\[
\Big\|\frac{e_{1 + n}}{w_1 \cdots w_n}\Big\|_1 = \frac{n+2}{w_1 \cdots w_n} \geq n + 2 \ \ \text{ for all } n \in \N,
\]
which shows that the second condition in (\ref{Kothe-LY-F5}) fails for $\ell = 1$.
\end{example}


\section*{Acknowledgement}

The authors thank the anonymous referees for their careful reading of the manuscript and valuable suggestions. 
The first author is beneficiary of a grant within the framework of the grants for the retraining, modality Mar\'ia Zambrano, 
in the Spanish university system (Spanish Ministry of Universities, financed by the European Union, NextGenerationEU).
The first author was also partially supported by CNPq -- Project {\#}308238/2021-4, by CAPES -- Finance Code 001,
and by MCIN/AEI/10.13039/501100011033/FEDER, UE, Project PID2022-139449NB-I00.


\end{document}